\newtheorem{introtheorem}{Theorem} % theorem for the introduction
\newtheorem{theorem}{Theorem}[section]
\newtheorem{lem}[theorem]{Lemma}
\newtheorem{kor}[theorem]{Corollary}
\newtheorem{prop}[theorem]{Proposition}
\newtheorem{rem}[theorem]{Remark}
\newtheorem{dfn}[theorem]{Definition}
\numberwithin{equation}{section}
\newcommand{\esssup}{\mathrm{ess~sup}}
\DeclareMathOperator*{\divv}{div}
\newcommand*{\Ascr}{\mathcal A}
\newcommand*{\Bscr}{\mathcal B}
\newcommand*{\Cscr}{\mathcal C}
\newcommand*{\Fscr}{\mathcal F}
\newcommand*{\Lscr}{\mathcal L}
\newcommand*{\Mscr}{\mathcal M}
\newcommand*{\Pscr}{\mathcal P}
\newcommand*{\Sscr}{\mathcal S}
\newcommand*{\N}{\mathbb{N}}
\newcommand*{\R}{\mathbb{R}}
\definecolor{orange}{rgb}{1.0, 0.55, 0.0} % further colors added
\newcommand{\loc}{\text{loc}}
\newcommand{\uloc}{\text{uloc}}
\newcommand{\uno}{\mathbf{1}}
\begin{document}
  \title[$2D$ vorticity Euler equations]{$2D$ vorticity Euler equations: Superposition solutions and nonlinear Markov processes}
  \author{Marco Rehmeier}
    \address{Institute of Mathematics, TU Berlin, 10623 Berlin, Germany}
    \email{rehmeier@tu-berlin.de}
  \author{Marco Romito}
	\address{Dipartimento di Matematica, Università di Pisa, largo B. Pontecorvo 5, 56127, Pisa, Italia}
    \email{marco.romito@unipi.it}
  \dedicatory{In memory of Giuseppe Da Prato}
  \date{}
  \begin{abstract}
	In this note we contribute two results to the theory of the $2D$ Euler equations in vorticity form on the full plane. First, we establish a generalized Lagrangian representation of weak (in general measure-valued) solutions, which includes and extends classical results on the Lagrangianity of weak solutions. Second, we construct nonlinear Markov processes which are uniquely determined by a selection of weak solutions from initial data in $L^1\cap L^p$, $p \geq 2$, and related spaces such as the classical and uniformly localized Yudovich space. It is well-known that for $p <\infty$ weak solutions are in general not unique, which renders a suitable selection nontrivial. 
  \end{abstract}
  \keywords{Euler equations, probabilistic representation, fluid dynamics, vorticity, Lagrangian solution, nonlinear Markov process}
  \subjclass{35Q31, 35Q84, 60J25}
  \maketitle
	\section{Introduction}
	We consider the $2D$ Euler Equations in vorticity form
	\begin{equation}\label{eq1}\tag{EE}
		\partial_t \omega + (v\cdot \nabla ) \omega = 0,\quad \omega(0)=\omega_0,\quad v = K* \omega,
	\end{equation}
	on $\R^2$,
	where $K$ is the \emph{Biot--Savart kernel}
	\begin{equation}\label{BS}
		K(x) := \frac{x^\perp}{2\pi|x|^2}, \quad (x_1,x_2)^\perp := (-x_2,x_1),\quad x=(x_1,x_2) \in \R^2,
	\end{equation}
	and the convolution $K * \omega$ occurs only in the spatial variable. These are the fundamental fluid dynamical equations for the vorticity $\omega$ associated with the velocity field $v$ of an inviscid incompressible fluid.
	Since $\divv v =0$, setting $K(\omega)(t,x):= (K*\omega(t))(x)$ the first equation in \eqref{eq1} can be written as
	\begin{equation}\label{eq2}
		\partial_t \omega + \divv (K(\omega)\omega) = 0,
	\end{equation}
	which is a first-order nonlinear Fokker--Planck equation (FPE). Generally, FPEs are equations for measures, and we will insist on this viewpoint for \eqref{eq1}. The purpose of this note is to contribute new results to two distinct aspects of the theory of \eqref{eq1}. Let us briefly describe these purposes.
	
	\textbf{Superposition solutions.}  Weak (by which we mean distributional, sometimes also called very weak) solutions to \eqref{eq1} are unique when the initial vorticity $\omega_0$ is bounded or nearly so \cite{Y63,CS21}. Otherwise, uniqueness is an open problem. In such potentially ill-posed cases it is desirable to identify particularly reasonable solutions, either from a physical or mathematical viewpoint. One such class are \emph{Lagrangian solutions}
	$$\omega(t,x) = \omega_0(X^{-1}(t,x)),$$
	where $X: [0,\infty)\times \R^2 \to \R^2$ is a measure-preserving flow of the associated ODE 
	\begin{equation}\label{eq_intro}
		\dot{y}(t) = K(\omega(t))(y(t)), \quad t \geq 0,
	\end{equation}
	see Definition \ref{def:Lagrang-sol}. This ODE models particle trajectories associated with \eqref{eq1}. Several results prove the Lagrangianity of weak solutions in different classes of solutions and initial data, summarized in Propositions \ref{prop:p>2} and \ref{prop:ES+VS}. Despite these important results, it remains an open question whether general weak solutions to \eqref{eq1}, in particular low-integrability or measure-valued solutions, are Lagrangian. In particular, for measure-valued initial data, solutions typically remain measure-valued, due to a lack of regularizing effects (for instance vortex sheets, see \cite{SSBF1981,DucRob1988}, or point vortices, see \cite{MarPul1994}). It cannot be expected that such solutions are Lagrangian in the above sense. Our contribution in this direction is the following representation of weak solutions as Lagrangian solutions in a generalized sense.
	
	\begin{introtheorem}[see Thm. \ref{thm1} for details]
		Any nonnegative finite measure-valued weak solution $\omega$ to \eqref{eq1}, satisfying
		\begin{equation}\label{L1-int-sol}
			\frac{(K* \omega(t)) (x)}{1+|x|}  \in L^1\big((0,T)\times \mathbb{R}^2;d\omega(t)dt\big),\quad \forall T>0,
		\end{equation}
		is a \emph{superposition solution}, i.e. there is a nonnegative finite Borel measure $\eta$ on $C([0,\infty),\R^2)$ (the space of continuous functions from $[0,\infty)$ to $\R^2$) concentrated on solution curves to \eqref{eq_intro} such that
		\begin{equation*}
			\eta \circ \pi_t^{-1} = \omega(t), \quad \forall t \geq 0,
		\end{equation*}
		where $\pi_t: C([0,\infty),\R^2)\to \R^2$, $\pi_t(f )=f(t)$, denotes the canonical projection at time $t$, and $\eta \circ \pi_t^{-1}$ is the image measure of $\eta$ with respect to $\pi_t$.
	\end{introtheorem}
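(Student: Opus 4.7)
The plan is to recognize equation \eqref{eq2} as a (nonlinear) continuity equation for the finite measure $\omega(t)$ with time-dependent Borel vector field
\[
    b_t(x) := (K*\omega(t))(x),
\]
and to invoke the classical superposition principle of Ambrosio (and Ambrosio--Trevisan) for solutions of continuity equations. Once one knows $(\omega(t))_{t\ge 0}$ is a narrowly continuous curve of finite positive measures solving $\partial_t\omega+\divv(b_t\omega)=0$ with $b_t$ satisfying an integrability condition of the form $\int_0^T\!\int\tfrac{|b_t(x)|}{1+|x|}\,d\omega(t)(x)\,dt<\infty$, the principle produces a finite measure $\eta$ on $C([0,\infty),\R^2)$ concentrated on absolutely continuous curves $\gamma$ with $\dot\gamma(t)=b_t(\gamma(t))$ for a.e.\ $t$, whose time marginals coincide with $\omega(t)$. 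Hypothesis \eqref{L1-int-sol} is precisely this integrability bound, and normalizing by $c:=\omega_0(\R^2)$ (mass is conserved, since $\omega$ solves a continuity equation) one reduces to the probability-measure setting in which the superposition principle is usually stated, then multiplies the resulting probability measure back by $c$.

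First I would verify the preliminary regularity in time. Testing the weak formulation against time-independent $\varphi\in C_c^\infty(\R^2)$ shows that $t\mapsto\int\varphi\,d\omega(t)$ is absolutely continuous with derivative $\int\nabla\varphi\cdot b_t\,d\omega(t)$, so after redefinition on a Lebesgue null set the curve $t\mapsto\omega(t)$ is narrowly continuous (this is a standard consequence of \eqref{L1-int-sol} together with the density of $C_c^\infty(\R^2)$ in $C_b(\R^2)$ for narrow convergence of tight families, and tightness follows from boundedness of total mass together with a moment-type control derived from integrating $\partial_t\omega$ against a smooth truncation of $\log(1+|x|)$, for which the drift integrability \eqref{L1-int-sol} is exactly tailored).

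Next I would apply the superposition principle (e.g.\ Ambrosio--Trevisan, or Ambrosio--Gigli--Savaré in the probability-measure case) to $(\omega(t)/c, b_t)$ to obtain a Borel probability measure $\tilde\eta$ on $C([0,\infty),\R^2)$ concentrated on curves $\gamma$ that are absolutely continuous and satisfy $\dot\gamma(t)=b_t(\gamma(t))=K(\omega(t))(\gamma(t))$ for a.e.\ $t\ge 0$, with $\tilde\eta\circ\pi_t^{-1}=\omega(t)/c$ for all $t\ge 0$. Setting $\eta:=c\,\tilde\eta$ gives the required measure. The solution curves in \eqref{eq_intro} are exactly those absolutely continuous $\gamma$, so the concentration property is immediate once the superposition is produced.

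The main obstacle is the technical verification that the superposition principle applies: the drift $b_t=K*\omega(t)$ is only a Borel-measurable time-dependent vector field, potentially unbounded and with no pointwise regularity (the Biot--Savart kernel is singular), so one must rely precisely on a version of the principle that needs only the weak integrability \eqref{L1-int-sol}, not Lipschitz or local boundedness assumptions. A second, more delicate point is to ensure that the tightness / narrow continuity step and the passage from the weak form of \eqref{eq1} to the hypotheses of the superposition principle are not spoiled by the fact that $\omega(t)$ may be a genuine measure rather than an $L^1$ density; this is handled by working entirely at the level of finite measures and using only the first-order structure of \eqref{eq2}, which makes $K(\omega)$ enter only through the combination $K(\omega)\,\omega$ appearing in \eqref{L1-int-sol}.
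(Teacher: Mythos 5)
Your proposal is correct and follows essentially the same route as the paper: reduce to a weakly (narrowly) continuous $dt$-version via the mass-conservation/truncation argument for which \eqref{L1-int-sol} is exactly tailored (cf.\ Lemma \ref{l:euler-vort-weak2}(ii)), then invoke the superposition principle for continuity equations under the relaxed integrability hypothesis. The only cosmetic difference is that you freeze the coefficient $b_t=K*\omega(t)$ and cite the linear Ambrosio--Trevisan principle directly, whereas the paper cites its nonlinear extension \cite{BR18_2,BRS19-SPpr}; for a fixed solution $\omega$ these are the same statement.
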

	
	Any Lagrangian solution is a superposition solution, with $\eta = \omega_0\circ X^{-1}$, but not vice versa. Thus superposition solutions are generalized Lagrangian solutions, and 
	Theorem 1 contains and extends the previously known results on the Lagrangianity of weak solutions summarized in Section \ref{sect:SP-sol}. 
	
	To compare Lagrangian and superposition solutions, consider the family of disintegration kernels $\{\eta_x\}_{x\in \R^2}$ of $\eta$ with respect to $\pi_0$, i.e the $\eta\circ \pi_0^{-1}$-a.s. unique measurable (in $x$) family of nonnegative Borel measures on $C([0,\infty),\R^2)$ such that $\eta_x(\pi_0\neq x)=0$ and $\eta = \int_{\R^2}\eta_{x}\,d(\eta\circ \pi_0^{-1})(x)$. For a Lagrangian solution, one has $\eta_x = \delta_{y_x}$, where $y_x$ denotes the member of the associated flow of \eqref{eq_intro} with initial datum $y_x(0) = x$. For superposition solutions, the measures $\eta_x$ need not be Dirac, but may have a larger support on the set of solution curves to \eqref{eq_intro} with initial datum $x$. In this case, the initial vorticity is not transported along a family of separated flow trajectories, but along a superposition of trajectories, of which several are initiated from the same initial value. This intuition is supported by Corollary \ref{cor:prob-repr}.
	Our result appears to be quite general, since the integrability assumption \eqref{L1-int-sol} is rather mild. Indeed, the minimal assumption needed to define weak solutions is $K*\omega\in L^1_\loc(\R^2\times(0,\infty),d\omega(t)\,dt)$, see Definition \ref{def:weak-sol}.

	\textbf{Nonlinear Markov processes.}
	The probability measure-valued solution family of any well-posed linear FPE can be represented as the one-dimensional time marginals of a uniquely determined Markov process, which consists of the unique solution path laws of the associated stochastic differential equation (SDE). This is not true for nonlinear FPEs, since the former relation is based on the Chapman--Kolmogorov equations, which are not satisfied for solution families to nonlinear FPEs, even if the latter is well-posed. Inspired by McKean \cite{McKean-classical2}, in \cite{R./Rckner_NL-Markov22} the first-named author and M. Röckner studied a notion of \emph{nonlinear Markov processes}, which is tailored to lead to a similar connection to nonlinear FPEs and the corresponding stochastic equations as in the linear case. Indeed, the main result in \cite{R./Rckner_NL-Markov22} states that for suitable solution families to (not necessarily well-posed) nonlinear FPEs there exists a uniquely determined nonlinear Markov process consisting of solution path laws to the associated distribution-dependent SDE with marginals given by the nonlinear FPE solutions. This result applies to many nonlinear PDEs (a subclass of nonlinear FPEs), such as porous media and Burgers equations. Hence, solutions to these equations have a (probabilistic) representation as the one-dimensional time marginals of a nonlinear Markov process. In \cite{BRZ23}, this  was also proven for the $2D$ vorticity Navier--Stokes equations
	$$\partial_t \omega + (v\cdot \nabla)\omega - \Delta \omega = 0, \quad v = K*\omega.$$
	We give a brief introduction to nonlinear Markov processes in Section \ref{subsect:NL-MP}. 
	
	In the light of these results, it is natural to ask whether solutions to \eqref{eq1} fit this framework and can be related to nonlinear Markov processes. Our second main purpose of this note is to answer this question affirmatively for several classes of initial data and solutions to \eqref{eq1}.
	
	\begin{introtheorem}[see Thm. \ref{thm:main-appl-selection} and Prop. \ref{prop:well-posed-case} for details]
		Let $E$ be the subset of probability densities from one of the following spaces: $(L^1\cap L^\infty)(\R^2)$, the classical or uniformly localized Yudovich spaces $Y^\Theta$ or $Y^\Theta_{\uloc}$, or $(L^1\cap L^p)(\R^2)$, with $p\geq2$. There exists a family of weak solutions $\{\omega^\zeta\}_{\zeta \in E}$ to \eqref{eq1}, $\omega^\zeta \in L^\infty_\loc([0,\infty),E)$ and $\omega^\zeta(0) = \zeta$, such that $\{w^\zeta_t\}_{\zeta \in E, t \geq 0}$ are the one-dimensional time marginals of a uniquely determined nonlinear Markov process. The latter consists of path laws $\mathbb{P}_\zeta, \zeta \in E,$ concentrated on solutions to \eqref{eq_intro}.
	\end{introtheorem}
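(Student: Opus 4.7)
The plan is to split according to whether the initial data lie in a well-posed or a genuinely non-unique class, produce a flow of weak solutions in each case, lift it to the path space via Theorem~1, and then invoke the abstract framework of \cite{R./Rckner_NL-Markov22} to package the resulting path measures into a nonlinear Markov process.

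\textbf{Selection step.} For each $\zeta \in E$ I look for $\omega^\zeta \in L^\infty_\loc([0,\infty),E)$ with $\omega^\zeta(0)=\zeta$ and the flow identity
\[
\omega^{\omega^\zeta(s)}(t) \;=\; \omega^\zeta(s+t), \qquad s,t \geq 0.
\]
In the Yudovich classes $(L^1\cap L^\infty)(\R^2)$, $Y^\Theta$, $Y^\Theta_{\uloc}$, uniqueness \cite{Y63,CS21} makes $\omega^\zeta$ canonical and the flow property automatic. In $(L^1\cap L^p)(\R^2)$, $p\geq 2$, I would start from the Lagrangian solutions guaranteed by the results summarized in Prop.~\ref{prop:p>2} and perform a measurable selection from the closed convex set of admissible solutions (by Kuratowski--Ryll-Nardzewski), enforcing the semigroup identity through a Krylov-style dynamic programming criterion, e.g.\ by iteratively minimizing a bounded, strictly convex functional of the future trajectory along a countable dense family of times.

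\textbf{Path-space construction and Markov property.} The integrability assumption \eqref{L1-int-sol} is automatic on each of the listed spaces, since $K*\omega^\zeta$ is locally bounded in time with values in $L^\infty(\R^2)$ by a Yudovich-type estimate on the Yudovich classes and by standard Biot--Savart bounds when $p>2$, with an $L^1$--$L^p$ interpolation handling the borderline $p=2$. Theorem~1 then delivers, for each $\zeta \in E$, a probability measure $\mathbb{P}_\zeta$ on $C([0,\infty),\R^2)$ concentrated on solutions of \eqref{eq_intro} driven by the velocity $K*\omega^\zeta$ and with time marginals $\omega^\zeta(t)$. To obtain the nonlinear Markov property in the sense of \cite{R./Rckner_NL-Markov22}, I disintegrate $\mathbb{P}_\zeta$ along $\pi_s$ and identify, for $\omega^\zeta(s)$-almost every $x$, the time-shifted conditional measure with the $x$-disintegration of $\mathbb{P}_{\omega^\zeta(s)}$: both are concentrated on ODE trajectories driven by the same velocity field after time $s$ (by the flow property of Step~1), both have the same one-dimensional marginals (again by the flow property), and both are produced by the superposition principle of Theorem~1, which determines them. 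Uniqueness of the nonlinear Markov process within the class of path laws on ODE solutions with the prescribed marginals then follows from the general theorem of \cite{R./Rckner_NL-Markov22}.

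\textbf{Main obstacle.} The genuine difficulty is concentrated in the non-unique case $(L^1\cap L^p)(\R^2)$, where the map $\zeta\mapsto \omega^\zeta$ must simultaneously be measurable and satisfy the semigroup identity. Existence of one solution per datum and a measurable choice are standard in isolation, but combining them in a nonuniqueness setting is the Euler analogue of Krylov's Markov selection for martingale problems and is where the technical substance sits; once a flow of solutions is in hand, verifying the hypotheses of the abstract framework of \cite{R./Rckner_NL-Markov22} on the family $\{\mathbb{P}_\zeta\}_{\zeta \in E}$ produced by Theorem~1 is largely bookkeeping.
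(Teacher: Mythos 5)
Your overall architecture matches the paper's: split into the well-posed Yudovich-type classes versus $(L^1\cap L^p)(\R^2)$, build a measurable flow selection in the non-unique case by a Krylov-type selection (the paper uses the function-valued adaptation of Krylov's scheme from \cite{CarKap2020}, with the real work being the compactness of the solution sets $\Cscr(\zeta)$ in $C_{\textup{w}}([0,\infty);L^p)$, Lemma \ref{l:compactmeasurable}), then lift to path space and invoke the framework of \cite{R./Rckner_NL-Markov22}. However, there is a genuine gap in your ``Path-space construction and Markov property'' step. You claim that the shifted conditional law of $\mathbb{P}_\zeta$ and the disintegration of $\mathbb{P}_{\omega^\zeta(s)}$ coincide because ``both are produced by the superposition principle of Theorem~1, which determines them.'' The superposition principle does \emph{not} determine the lift: it is an existence statement, and when the ODE \eqref{eq_intro} is not well-posed there are in general many probability measures on $C([0,\infty),\R^2)$ concentrated on its solution curves with the same one-dimensional marginals. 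Without uniqueness of the lift, your disintegration argument does not close, and neither the nonlinear Markov property nor the claimed uniqueness of the process follows.

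The missing ingredient is the extremality condition of Theorem \ref{theorem1}, checked via Lemma \ref{lem}: one must show that $\omega^\zeta$ is the \emph{unique} solution to the linearized (frozen-coefficient) equation \eqref{aux567} in the relevant class. In the paper this is exactly where the hypothesis $p\geq 2$ enters: since $p'\leq p$, Lemma \ref{lem:K-prop} gives $v^\zeta=K*\omega^\zeta\in L^\infty([0,\infty);W^{1,p'}_{\textup{loc}}(\R^2,\R^2))$ with $\divv v^\zeta=0$, so the DiPerna--Lions uniqueness theorem \cite[Thm.II.2]{DiPL89} applies to \eqref{aux567} and yields uniqueness in $L^\infty([0,\infty);L^p(\R^2))$; in the Yudovich classes the same role is played by the Osgood property of $v^\zeta$, which gives a unique flow for \eqref{eq_intro}. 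It is this linearized uniqueness, not the superposition principle, that forces the lift to be unique and delivers \eqref{Markov-prop}. Your proposal never uses the $W^{1,p'}_{\textup{loc}}$ regularity of the velocity, so as written the argument would apply equally to settings where the linearized equation is non-unique and the conclusion is false. (A smaller inaccuracy: $K*\omega^\zeta$ is not locally bounded in time with values in $L^\infty(\R^2)$ for general $\zeta\in(L^1\cap L^p)(\R^2)$ with $p<\infty$; the integrability \eqref{L1-int-sol} instead follows from the $L^{p'}+L^\infty$ decomposition of Lemma \ref{lem:K-prop}(iii).)
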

	
	Since solutions from $(L^1 \cap L^p)(\R^2)$-initial data with $p<\infty$ are not expected to be unique (see \cite{Vis2018a,Vis2018b,DlBACGJK2024} for rigorous results with suitable forcing), in this case our result requires an appropriate selection of solutions, see Proposition \ref{prop:Romitos-prop}.
	
	Opposed to previous applications of the results from \cite{R./Rckner_NL-Markov22}, \eqref{eq1} is a first-order equation. Hence, the corresponding nonlinear Markovian dynamics is deterministic. We anticipate a substantial theory of nonlinear Markov processes in future works, including results on the associated semigroups, invariant measures and further asymptotic properties. We expect that, via Theorem 2, this nonlinear theory will contribute to the theory of \eqref{eq1} and, vice versa, that Theorem 2 will be a further motivation to achieve progress in this direction.
	
    Finally, we conclude the introduction with a remark about the three-dimensional case. While it may be tempting to apply the strategy illustrated in this paper to  three dimensions, doing so does not appear to be straightforward. First, the equation for vorticity is vector-valued, making it unclear how to interpret it as a nonlinear Fokker-Planck equation. Furthermore, the physics of the problem changes dramatically in three dimensions. The extra stretching term is the key mathematical object that captures the new physical phenomena (see for instance \cite{Fri1995}). For one thing, vorticity is not conserved along the Lagrangian trajectories. Representation formulas must account for this effect through the cumulative effect of the deformation tensor. This can be seen, for example, in \cite{BusFlaRom2005,ConIye2008} (although these papers deal with noisy Lagrangian trajectories, leading to the Navier-Stokes equations).
	
	\textbf{Organization.} In Section \ref{sect:FPE}, we briefly recall nonlinear FPEs, the associated distribution-dependent SDEs, their relation, and pose \eqref{eq1} in this FPE-framework. Section \ref{sect:SP-sol} contains our results on superposition solutions, and in Section \ref{sect:NL-MP} we present our results related to nonlinear Markov processes.
	
	\paragraph{Notation.} 
	We write $B_r(x)\subseteq \R^d$ for the Euclidean ball of radius $r>0$ with center $x\in \R^d$. $\Mscr^+_b(\R^d)$ denotes the set of nonnegative finite Borel measures on $\R^d$ while, for a topological space $X$, $\Pscr(X)$ is the set of Borel probability measures on $X$. If $X = \R^d$, we write $\Pscr = \Pscr(\R^d)$.
	
	For $k \in \N_0 \cup \{\infty\}$ and $U \subseteq \R^d$, $C_{(c)}^k(U,\R^m)$ are the usual spaces of continuous (compactly supported) functions from $U$ to $\R^m$ with continuous partial derivatives up to order $k$. If $k=0$, we write $C_{(c)}(U,\R^m)$. If $U=[0,\infty)$, we denote by $\pi_t$ the canonical projection $\pi_t(f) = f(t)$, $t \geq 0$, on $C([0,\infty),\R^m)$.
	
	For $p \in [1,\infty]$ and a Borel measure $\mu$, $L_{(\textup{loc})}^p(\R^d,\R^m;\mu)$ are the usual spaces of $L^p$-integrable functions from $\R^d$ to $\R^m$, with their usual norms $||\cdot||_{L^p}$. If $\mu = dx$ (Lebesgue measure), we abbreviate by $L^p(\R^d,\R^m)$. If $m=1$, we write $L_{(\textup{loc})}^p(\R^d;\mu)$. $W_{(\textup{loc})}^{1,p}(\R^d,\R^m)$ are the standard (local) first-order Sobolev spaces with norm $||\cdot||_{W^{1,p}}$. We write $W^{1,p}_{(\textup{loc})}(\R^d)$ if $m=1$.
	
	We write $C_\textup{w}([0,\infty),L^p(\R^2))$ for the space of continuous maps from $[0,\infty)$ into $L^p(\R^2)$, the latter  considered with its weak topology.
	\section{Nonlinear Fokker--Planck equations}\label{sect:FPE}
	Nonlinear Fokker--Planck equations (FPE) are parabolic equations for Borel measures on $\R^d$ of type
	\begin{equation}\label{FPE}
		\partial_t \mu_t = \partial^2_{ij}\big(a_{ij}(t,\mu_t,x)\mu_t\big) - \partial_i \big(b_i(t,\mu_t,x)\mu_t\big),\quad \mu_0 = \zeta,\quad t \geq 0,
	\end{equation}
	with $a_{ij}, b_i: [0,\infty) \times \Mscr^+_b(\R^d) \times \R^d \to \R$, $1\leq i,j \leq d$, $\zeta \in \Mscr_b^+(\R^d)$. $(a_{ij})_{1\leq i,j \leq d}$ is always assumed to be pointwise symmetric and nonnegative definite. The notion of solution we consider is in the usual distributional sense, also called very weak solution in the literature. Precisely, a curve $(\mu_t)_{t\geq 0}$ in $\Mscr^+_b$ is a solution to \eqref{FPE}, if $t\mapsto \mu_t(A)$ is Borel measurable for each $A \in \Bscr(\R^d)$, $[(t,x)\mapsto a_{ij}(t,\mu_t,x)], [(t,x)\mapsto b_i(t,\mu_t,x)] \in L^1_\loc([0,\infty)\times \R^d;d\mu_t dt)$ and 
	$$\int_{[0,\infty)\times \R^d} \partial_t \varphi(t,x) + a_{ij}(t,\mu_t,x) \partial_{ij}\varphi(t,x) + b_i(t,\mu_t,x)\partial_i \varphi(t,x) \, d\mu_t(x) dt + \int_{\R^d} \varphi(0,x)\,d\zeta(x) = 0$$
	for all $\varphi \in C^\infty_c([0,\infty)\times \R^d)$.
	If each $\mu_t$ is a probability measure, $\mu$ is called probability solution.
	\eqref{eq2} is a first-order equation of this type, with $d=2$ and
	\begin{equation*}
		a_{ij} = 0,\quad b_i(t,\mu,x) = K_i(\mu)(x), \quad i,j \in \{1,2\},
	\end{equation*}
	with $K = (K_1,K_2)$ as in \eqref{BS}, and where we write $K(\mu)(x):= \int_{\R^2}K(x-y)\,d\mu(y)$, which is consistent with the notation $K(f)$ for functions $f:\R^2\to\R$.
	There is a natural relation between \eqref{FPE} and its associated McKean--Vlasov stochastic differential equation 
	\begin{equation}\label{DDSDE}
		dX_t = b(t,\mathcal{L}(X_t),X_t)dt + \sqrt{2}\sigma(t,\mathcal{L}(X_t),X_t)dB_t,\quad \mathcal{L}(X_0) = \zeta,\quad t \geq 0,
	\end{equation}
	where $B$ is a $d$-dim. Brownian motion, $\sigma \sigma^T = (a_{ij})_{i,j \leq d}$, and $\mathcal{L}(X_t)$ denotes the distribution of $X_t$. For \eqref{eq2}, this equation is
	\begin{equation*}\label{ODE}
		\dot{X}_t = K(\mathcal{L}(X_t))(X_t),
	\end{equation*}
	i.e. an ODE with a distribution-dependent vector field.
	
	\begin{rem}\label{rem:SP-princ}
		More precisely, the relation between \eqref{FPE} and \eqref{DDSDE} is the following. For any weakly continuous probability solution $t\mapsto \mu_t$ such that
		$$\bigg[(t,x)\mapsto \frac{|a(t,\mu_t,x)|+| b(t,\mu_t,x)\cdot x|}{1+|x|^2}\bigg] \in L^1((0,T)\times \R^d;d\mu_t dt),\quad \forall T>0,$$
		there exists a martingale solution $Q \in \Pscr\big(C([0,\infty),\R^d)\big)$ to the nonlinear martingale problem associated with $a$ and $b$ such that $Q\circ \pi_t^{-1} = \mu_t$; equivalently, there is a probabilistically weak solution process $X$ to \eqref{DDSDE} with $\mathcal{L}(X_t) = \mu_t$, see \cite{BR18,BR18_2,BRS19-SPpr}. This is the nonlinear extension of the Ambrosio--Figalli--Trevisan superposition principle \cite{Ambrosio08,Figalli09,trevisan16}. Here $x\cdot y$ denotes the usual Euclidean inner product.
	\end{rem}
	\begin{rem}\label{rem:mg-sol-det-case}
		In the first-order case $a_{ij} = 0$, solutions to the nonlinear martingale problem are exactly those Borel measures $Q$ on $C([0,\infty),\R^d)$ that are concentrated on absolutely continuous integral solution curves of the ODE
		$$\dot{y}(t) = b(t,Q\circ \pi_t^{-1},y(t)),\quad t \geq 0.$$
	\end{rem}

	\section{Superposition solutions}\label{sect:SP-sol}
	We start by collecting the following well-known properties of the Biot--Savart kernel $K$ from \eqref{BS}. For $p \in [1,\infty]$, we denote by $p' \in [1,\infty]$ its conjugate, i.e. $\frac 1 p + \frac 1 {p'} = 1$.
	\begin{lem}\label{lem:K-prop}
		\begin{enumerate}
			\item [(i)] $K \in L^1(\R^2,\R^2) + L^\infty(\R^2,\R^2)$.
			\item[(ii)] Let $p> 1$ and $\omega \in (L^1 \cap L^p)(\R^2)$. Then $K*\omega \in W^{1,p}_{\textup{loc}}(\R^2,\R^2)$. If $p \geq \frac 4 3$, then $K*\omega \in L_\loc^{p'}(\R^2,\R^2)$, and $K*\omega \in L^{p'}(\R^2,\R^2)$ if also $p<2$.
			\item[(iii)] Let $p\geq 2$ and $\omega \in (L^1 \cap L^p)(\R^2)$. Then $(K*w)w \in L^1(\mathbb{R}^2,\mathbb{R}^2)$. If $\omega \in L^\infty([0,\infty);(L^1\cap L^p)(\R^2))$, then $(K*\omega)\omega \in L^\infty([0,\infty);L^1(\R^2,\R^2))$.
			%	\item [(iii)] Let $\omega\in (L^1\cap L^p)(\R^2)$ and $p \geq 2$. Then $(K*\omega)\omega \in L^1(\R^2,\R^2)$. 
		\end{enumerate}
	\end{lem}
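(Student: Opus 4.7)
The plan is to work with the natural decomposition $K = K_0 + K_\infty$, where $K_0 := K\mathbf{1}_{B_1(0)}$ and $K_\infty := K\mathbf{1}_{\R^2\setminus B_1(0)}$; this is motivated by the identity $|K(x)| = (2\pi|x|)^{-1}$, which is locally integrable in dimension two and bounded at infinity. Part (i) follows directly: $\int_{B_1(0)}|x|^{-1}\,dx = 2\pi$ in polar coordinates, so $K_0 \in L^1(\R^2,\R^2)$, while $|K_\infty| \leq (2\pi)^{-1}$ pointwise gives $K_\infty \in L^\infty(\R^2,\R^2)$.

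For part (ii), Young's inequality applied to each piece yields $K_0*\omega \in L^p(\R^2,\R^2)$ (via $L^1 * L^p \hookrightarrow L^p$) and $K_\infty*\omega \in L^\infty(\R^2,\R^2)$ (via $L^\infty * L^1 \hookrightarrow L^\infty$), hence $K*\omega \in L^p_\loc$. For the weak gradient I would invoke the representation $K*\omega = \nabla^\perp(G*\omega)$, where $G(x) = -(2\pi)^{-1}\log|x|$ is the Newtonian potential on $\R^2$, so that the entries of $\nabla(K*\omega)$ are, up to signs, second derivatives of $G*\omega$, each expressible as a Riesz-transform-type singular integral of $\omega$; classical Calderón--Zygmund theory then yields $\nabla(K*\omega) \in L^p(\R^2)$ for every $p \in (1,\infty)$. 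This is the only step requiring nontrivial input; everything that follows is bookkeeping with Young's, Hölder's, and Hardy--Littlewood--Sobolev inequalities.

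For the additional $L^{p'}_\loc$ claim in (ii), I distinguish two regimes. If $p \geq 2$, then $p' \leq 2 \leq p$ and the already-established inclusion $K*\omega \in L^p + L^\infty$ embeds continuously into $L^{p'}_\loc$. If $p \in [4/3, 2)$, I combine the pointwise bound $|(K*\omega)(x)| \leq (2\pi)^{-1}(|{\cdot}|^{-1}*|\omega|)(x)$ with the Hardy--Littlewood--Sobolev inequality for the Riesz potential of order $1$ on $\R^2$: it maps $L^p$ into $L^r$ with $\frac{1}{p} - \frac{1}{r} = \frac{1}{2}$, so $r = \frac{2p}{2-p} \geq p'$ precisely when $p \geq 4/3$, yielding $K*\omega \in L^r(\R^2,\R^2) \hookrightarrow L^{p'}_\loc$.

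For (iii), split once more: $(K_\infty*\omega)\omega \in L^1$ since $K_\infty*\omega \in L^\infty$ and $\omega \in L^1$. For the remaining piece, the hypothesis $p \geq 2$ gives $p' \leq 2 \leq p$, so interpolation produces $\omega \in L^{p'}(\R^2)$; together with $K_0*\omega \in L^p(\R^2,\R^2)$ from Young's inequality, Hölder's inequality with conjugate pair $(p,p')$ then delivers $(K_0*\omega)\omega \in L^1(\R^2,\R^2)$. All estimates are quantitatively controlled by $\|\omega\|_{L^1}+\|\omega\|_{L^p}$, so the time-uniform assertion for $\omega \in L^\infty([0,\infty);(L^1\cap L^p)(\R^2))$ follows immediately.
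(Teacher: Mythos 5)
Your proof is correct and follows essentially the same route as the paper: the decomposition $K = K\mathbf{1}_{B_1(0)} + K\mathbf{1}_{B_1(0)^c}$ for (i), Young's inequality plus Calder\'on--Zygmund/Hardy--Littlewood--Sobolev bounds for (ii), and interpolation in $L^1\cap L^p$ combined with H\"older's inequality for (iii). The only differences are cosmetic: where the paper cites \cite{BRZ23} for $K*\omega\in W^{1,p}_{\textup{loc}}$ and for the $L^{4/3}\to L^4$ Biot--Savart estimate, you supply the standard singular-integral and Riesz-potential arguments directly, and you establish the $L^{p'}_{\textup{loc}}$ claim by a case split at $p=2$ rather than by reducing uniformly to the single exponent pair $(4/3,4)$.
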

	\begin{proof}
		\begin{enumerate}
			\item [(i)] $K = K_1+K_2 := \mathds{1}_{B_1(0)}K + \mathds{1}_{B_1(0)^c}K \in L^1(\R^2,\R^2) + L^\infty(\R^2,\R^2)$, where $B_1(0)\subseteq \R^2$ denotes the Euclidean ball with radius $1$ centered at $0$.
			\item[(ii)] For the first part, see \cite[(1.5), (2.11)]{BRZ23}.
			For the second part, since $\omega \in L^{\frac 4 3}(\R^2)$, by (i) and \cite[(1.5)]{BRZ23} we have $K*\omega \in L^4(\R^2,\R^2)$, i.e. in particular $K*\omega \in L^q_\loc(\R^2,\R^2)$ for all $q \in [1,4]$. Since $p\geq \frac 4 3$ implies $p' \in [1,4]$, the claim follows. Finally, the third part follows directly from \cite[(1.5)]{BRZ23}.
			\item[(iii)] Young inequality yields $K_1*\omega \in (L^1 \cap L^p)(\R^2,\R^2)$ and $K_2*\omega \in L^\infty(\R^2,\R^2)$, so (since $p\geq 2$) in particular $K*\omega \in L^{p'}(\R^2,\R^2) + L^\infty(\R^2,\R^2)$. Both claims now follow from the assumption on $\omega$.
		\end{enumerate}
	\end{proof}

	As said in the introduction, we consider \eqref{eq1} as the first-order nonlinear FPE \eqref{eq2} with the following standard notion of (in general measure-valued), solutions.
	\begin{dfn}\label{def:weak-sol}
		A curve $\omega: (0,\infty)\to \mathcal{M}^+_b(\R^2)$ is a \textit{weak solution} to \eqref{eq1} with initial datum $\omega_0 \in \mathcal{M}^+_b(\R^2)$, if
		\begin{enumerate}
			\item[(i)] $t\mapsto \omega(t)(A)$ is Borel measurable for all $A\in \Bscr(\R^2)$.
			\item [(ii)] $K * \omega \in L^1_{\text{loc}}\big((0,\infty)\times \R^2;d\omega(t)dt\big)$
			\item [(iii)] The identity
			\begin{equation*}\label{eq:euler-vort-weak}
				\int_0^\infty\int_{\R^2} \partial_t\varphi + \nabla \varphi \cdot (K*\omega(t))\,d\omega(t)dt +\int_{\R^2} \varphi(0) \,d\omega_0= 0
			\end{equation*}
			holds for all $\varphi \in C^\infty_c([0,\infty)\times \R^2)$.
		\end{enumerate}
	\end{dfn}
	If a solution consists of measures $\omega(t)$ absolutely continuous w.r.t. Lebesgue measure, we identify $\omega(t)$ with its density (also denoted $\omega(t)$ if no confusion can occur). Statements such as $\omega(t) \in L^p(\R^2)$ and $\omega \in L^\infty((0,\infty);L^p(\R^d))$ are understood accordingly.
	
	\begin{rem}\label{rem:int-Sobolevembedd}
		If $\omega \in L_{\textup{loc}}^1((0,\infty);(L^1\cap L^p)(\R^2))$ for $p\geq \frac 4 3$, then Lemma \ref{lem:K-prop} implies $K*\omega \in L_{\textup{loc}}^1((0,\infty),L^{p'}_{\textup{loc}}(\R^2))$, and (ii) of the previous definition is satisfied. In particular, $\omega$ is a standard weak solution of \eqref{eq1} in the sense of (for instance) \cite{Che1995}.
	\end{rem}
	
	\begin{lem}\label{l:euler-vort-weak2}
		Let $\omega$ be a weak solution to \eqref{eq1} such that $K*\omega \in L^1_{\textup{loc}}([0,\infty)\times \R^2;d\omega(t)dt)$.
		\begin{enumerate}
			\item [(i)] 	If $t \mapsto \omega(t)$ is vaguely continuous, then 
			\begin{equation}\label{aux123}
				\int_{\R^2} \phi\, \omega(t)dx = \int_{\R^2} \phi \,w_0 dx + \int_0^t \int_{\R^2} \nabla \phi \cdot (K*\omega(s))\,d\omega(s)ds,\quad \forall t \geq 0,
			\end{equation}
			holds for all $\phi \in C^\infty_c(\R^2)$ and is, in fact, equivalent to (iii) of the previous definition. More precisely, a vaguely continuous curve $(\omega(t))_{t>0}$ in $\Mscr^+_b$ satisfying both $K*\omega \in L^1_{\textup{loc}}([0,\infty)\times \R^2;d\omega(t)dt)$ and \eqref{aux123} satisfies Definition \ref{def:weak-sol}.
			\item[(ii)]
			
			There exists a unique vaguely continuous $dt$-version $\tilde{w}$ of $\omega$ on $[0,\infty)$ with initial datum $\omega_0$. If in addition \eqref{L1-int-sol} holds, then $\tilde{\omega}(t)(\R^2) = \omega_0(\R^2)$ for all $t >0$, and $\tilde{\omega}$ is weakly continuous.
			\item[(iii)] Let $p\geq \frac 4 3$. If $w_0 \in (L^1\cap L^p)(\R^2)$ and $w \in L^\infty([0,\infty);(L^1\cap L^p)(\R^2))$, then \eqref{L1-int-sol} holds and the weakly continuous version of $\omega$ additionally belongs to $C_{\textup{w}}([0,\infty);L^p(\R^2))$.
		\end{enumerate}
	\end{lem}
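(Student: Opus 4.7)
The plan for part (i) is standard. For the forward direction I test Definition \ref{def:weak-sol}(iii) with $\varphi(s,x) = \chi_\epsilon(s)\phi(x)$, where $\phi \in C^\infty_c(\R^2)$ and $\chi_\epsilon \in C^\infty_c([0,\infty))$ smoothly approximates $\mathds{1}_{[0,t]}$ with $\chi_\epsilon(0)=1$ and $\chi_\epsilon' \to -\delta_t$ as measures. Vague continuity of $s\mapsto\omega(s)$ handles the $\chi_\epsilon'$-term in the limit, while dominated convergence, justified by $K*\omega \in L^1_\loc([0,\infty)\times\R^2;d\omega(s)\,ds)$, handles the drift term, yielding \eqref{aux123}. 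The converse starts from \eqref{aux123}, multiplies by $\chi'(t)$ for $\chi \in C^\infty_c([0,\infty))$ with $\chi(0)=1$, integrates in $t$ and applies Fubini to recover the weak formulation for tensor test functions; density of such tensor products in $C^\infty_c([0,\infty)\times\R^2)$ closes the argument.

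For part (ii), for each $\phi \in C^\infty_c(\R^2)$ set
\begin{equation*}
F_\phi(t) := \int_{\R^2} \phi\,d\omega_0 + \int_0^t \int_{\R^2} \nabla\phi\cdot(K*\omega(s))\,d\omega(s)\,ds,
\end{equation*}
which is continuous in $t$ by the local integrability of $K*\omega$ against $d\omega(s)\,ds$. By (i), $F_\phi(t) = \int\phi\,d\omega(t)$ for $t$ outside a $\phi$-dependent null set. Choosing a countable $\{\phi_n\}\subseteq C^\infty_c(\R^2)$ dense in $C_c(\R^2)$ in uniform norm (and containing, for every $R\in\N$, a bump $\psi_R$ with $\psi_R\equiv 1$ on $B_R(0)$ and $\supp\psi_R\subseteq B_{R+1}(0)$), I obtain a full-measure set $G\subseteq[0,\infty)$ on which $\int\phi_n\,d\omega(t)=F_n(t)$ for every $n$. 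In particular $\omega(t)(B_R(0)) \leq F_{\psi_R}(t)$ on $G$, so local masses are uniformly bounded on compact $t$-intervals. For $t \notin G$ pick $t_m \to t$ in $G$: local tightness extracts a vague limit, which the prescribed values $F_n(t)$ and Riesz representation pin down uniquely, defining $\tilde\omega(t)$. Uniqueness of the vaguely continuous version on $[0,\infty)$ then follows from continuity alone. Assuming \eqref{L1-int-sol}, I test \eqref{aux123} for $\tilde\omega$ with $\phi_R(x) := \phi_1(x/R)$ for a fixed $\phi_1 \in C^\infty_c(\R^2)$ with $\phi_1\equiv 1$ on $B_1(0)$ and $\supp\phi_1 \subseteq B_2(0)$. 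Since $|\nabla\phi_R(x)| \lesssim R^{-1}\mathds{1}_{\{R\leq|x|\leq 2R\}} \lesssim (1+|x|)^{-1}\mathds{1}_{\{|x|\geq R\}}$, \eqref{L1-int-sol} and dominated convergence drive the drift remainder to zero as $R\to\infty$, giving $\tilde\omega(t)(\R^2)=\omega_0(\R^2)$. Vague continuity together with conservation of total mass rules out escape of mass to infinity and thereby upgrades to weak continuity.

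For part (iii), Lemma \ref{lem:K-prop}(iii) furnishes $(K*\omega(s))\omega(s) \in L^\infty([0,\infty);L^1(\R^2,\R^2))$, from which \eqref{L1-int-sol} follows at once via $|K*\omega|/(1+|x|)\leq|K*\omega|$. Part (ii) thus produces the weakly continuous version $\tilde\omega$. By lower semicontinuity of the $L^p$-norm under vague convergence (valid for $p>1$ thanks to reflexivity of $L^p$ and its identification with the dual of $L^{p'}$), the bound $\omega \in L^\infty([0,\infty);L^p(\R^2))$ transfers to $\sup_t\|\tilde\omega(t)\|_{L^p} \leq M < \infty$. For every $\phi\in C^\infty_c(\R^2)$ the map $t\mapsto\int\phi\,\tilde\omega(t)\,dx$ is continuous by (ii). Since $p \geq 4/3$ forces $p' < \infty$, $C^\infty_c(\R^2)$ is dense in $L^{p'}(\R^2)$, so a three-$\epsilon$ argument leveraging the uniform bound $M$ extends continuity to every $\phi \in L^{p'}(\R^2)$, yielding $\tilde\omega \in C_{\textup{w}}([0,\infty);L^p(\R^2))$.

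The main obstacle is in part (ii): constructing $\tilde\omega(t)$ at times in the exceptional null set and showing independence of the approximating sequence $t_m\to t$. This hinges on extracting the uniform local mass bounds from the a.e.\ identification $\int\phi_n\,d\omega(t)=F_n(t)$, which is why the dense family $\{\phi_n\}$ must include cutoffs of balls, and on the separation property of $\{\phi_n\}$ in $C_c(\R^2)$ to single out the unique vague limit through Riesz representation.
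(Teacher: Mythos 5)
Your overall route coincides with the paper's: part (i) is the standard tensor-test-function argument (the paper simply cites \cite[Prop.6.1.2]{FPKE-book15}); in part (ii) you build the vaguely continuous version by hand where the paper defers to \cite[Lem.2.3]{Rehmeier-nonlinear-flow-JDE}, and your mass-conservation argument via cutoffs $\phi_R$ with $|\nabla\phi_R|\lesssim (1+|x|)^{-1}$ plus dominated convergence under \eqref{L1-int-sol}, followed by ``vague continuity with constant total mass implies weak continuity'', is exactly the paper's; the three-$\varepsilon$ argument in part (iii) is also the paper's.

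There is, however, one genuine gap, in part (iii): you derive \eqref{L1-int-sol} from Lemma \ref{lem:K-prop}(iii), but that statement is formulated (and proved, via $p'\le p$) only for $p\ge 2$, whereas the claim here covers all $p\ge \tfrac43$. For $\tfrac43\le p<2$ you need instead the Hardy--Littlewood--Sobolev-type bound underlying Lemma \ref{lem:K-prop}(ii): by interpolation $\omega(t)\in L^{4/3}(\R^2)$ uniformly in $t$, hence $K*\omega(t)\in L^4(\R^2,\R^2)$ globally, and H\"older gives $\int_{\R^2}|K*\omega(t)|\,\omega(t)\,dx\le \|K*\omega(t)\|_{L^4}\|\omega(t)\|_{L^{4/3}}$, uniformly in $t$; this yields $(K*\omega)\omega\in L^\infty([0,\infty);L^1(\R^2,\R^2))$ and hence \eqref{L1-int-sol} for the whole range $p\ge\tfrac43$. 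A secondary, more minor point in part (ii): the vague limit you construct at exceptional times is a priori only a locally finite measure, and its membership in $\Mscr^+_b(\R^2)$ is not justified at the stage where you assert existence of the vaguely continuous version without assuming \eqref{L1-int-sol}; once \eqref{L1-int-sol} is invoked, your $\phi_R$ computation does supply finiteness together with the exact value of the total mass, but the first sentence of (ii) as you prove it still needs a word on why $\tilde\omega(t)(\R^2)<\infty$ for $t$ in the exceptional null set.
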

	\begin{rem}
		In particular, any weak solution $\omega$ with initial datum $\omega_0 \in \Pscr(\R^2)$ satisfying $w \in L^\infty([0,\infty);(L^1\cap L^p)(\R^2))$ for $p \geq \frac 4 3$ has a $dt$-version in $C_{\textup{w}}([0,\infty);L^p(\R^2))$ which is a weakly continuous probability solution satisfying \eqref{aux123}. Moreover, for $w \in L^\infty([0,\infty),L^1 \cap L^p)$ weakly continuous, being a weak solution is equivalent to \eqref{aux123}.
	\end{rem}
	\begin{proof}
		\begin{enumerate}
			\item [(i)] The proof is standard, see for instance \cite[Prop.6.1.2]{FPKE-book15}.
			\item[(ii)] The first part follows from \cite[Lem.2.3]{Rehmeier-nonlinear-flow-JDE}. Since $\tilde{\omega}$ satisfies \eqref{aux123}
			for any $\phi \in C^\infty_c(\R^2)$, it suffices to let $\phi = \phi_k$ such that $\phi_k= 1$ on $B_k(0)$ and $|\nabla \phi_k(x)|\leq (1+|x|)^{-1}$, and let $k \to \infty$ to conclude $\tilde{\omega}(t)(\R^2) = \omega_0(\R^2)$ for all $t\in [0,\infty)$ by Lebesgue's dominated convergence, which applies due to \eqref{L1-int-sol}. Since every vaguely continuous curve in $\Mscr^+_b$ with constant total mass is weakly continuous, the proof is complete.
			
			\item[(iii)] \eqref{L1-int-sol} follows from Lemma \ref{lem:K-prop}. The weak $L^p$-continuity can be proven as follows. We abbreviate $\omega(t,f) := \int_{\R^2} f(x)\omega(t,x)dx$. Let $g\in L^{p'}(\R^2)$, $t \geq 0$, $\varepsilon>0$ and $\phi \in C_c(\R^2)$ such that $||g-\phi||_{{L^p}'} < \varepsilon$. Then, for all $r \geq 0$, $|\omega(r,\phi)-\omega(r,g)|\leq \varepsilon||\omega||_{L^\infty(L^{p})}$. Since 
			$$\omega(t,g)-\omega(s,g) = \omega(t,g)-\omega(t,\phi) + \omega(t,\phi) - \omega(s,\phi) + \omega(s,\phi) - \omega(s,g),\quad \forall s \geq 0,$$
			we find
			$$\limsup_{s\to t}|\omega(t,g)-\omega(s,g)| \leq 2\varepsilon ||\omega||_{L^\infty(L^p)} + \lim_{s\to t}|\omega(t,\phi)-\omega(s,\phi)|,$$
			and the claim follows by letting $\varepsilon \to 0$.
		\end{enumerate}
	\end{proof}
	
	Several works are concerned with the question whether solutions to \eqref{eq1} are related to the distribution-dependent ODE \eqref{eq_intro}, more precisely
	whether they are \textit{Lagrangian} in the following sense.
	\begin{dfn}\label{def:Lagrang-sol}
		$\omega: (0,\infty)\times \R^2\to \R$ is a \textit{Lagrangian solution} to \eqref{eq1} with initial datum $\omega_0$, if 
		$$\omega(t,x) = \omega_0(X^{-1}(t,x)),\quad \forall t \geq 0,$$
		where $X^{-1}$ is the inverse of a measure preserving flow $X: \R\times \R^2 \to \R^2$ to \eqref{eq_intro}, i.e. for $dx$-a.e. $x \in \R^2$, $t\mapsto X(t,x)$ is an absolutely continuous integral solution to \eqref{eq_intro} with $X(0,x) = x$, and $dx \circ X(t)^{-1} = dx$ for all $t \in \R$.
	\end{dfn}
	$X$ as in the previous definition is called \textit{regular Lagrangian flow}.
	In \cite{DiPL89}, the notion of \textit{renormalized solution} for linear transport equations was introduced and the following relation  with Lagrangian solutions was proven. Let $b: [0,\infty)\times  \R^2 \to \R^2$.
	\begin{prop}\label{prop:DiPerna-L}
		Let $p\geq 1$ and $\omega_0 \in L^p(\R^2)$. If $\divv b \equiv 0$,
		\begin{equation}\label{0}
			I) \,\,b \in L^1_{\textup{loc}}([0,\infty),W^{1,1}_{\textup{loc}}(\R^2)) \,\,\textup{ and }\,\,II)\,\, \frac{b}{1+|x|} \in L^1_{\textup{loc}}([0,\infty);(L^1+L^\infty)(\R^2)),
		\end{equation}
		then there exists a unique renormalized solution $\omega \in L_{\textup{loc}}^\infty([0,\infty),L^p(\R^2))$ to the linear PDE
		\begin{equation}\label{1}
			\partial_t \omega + \divv(b\omega) = 0,\quad (t,x) \in (0,\infty)\times \R^2,
		\end{equation}
		with initial datum $\omega_0$, and $\omega$ is a Lagrangian solution to \eqref{1}. The latter is understood as in Definition \ref{def:Lagrang-sol}, but with a flow for the ODE associated with \eqref{1} instead of \eqref{eq_intro}.
	\end{prop}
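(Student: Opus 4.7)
The plan is to follow the classical DiPerna--Lions program in three stages: first produce a renormalized solution via mollification, then deduce uniqueness from the renormalization property, and finally identify the unique solution with the datum transported along the regular Lagrangian flow of $b$.

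For existence, I would start by constructing a distributional solution $\omega \in L^\infty_\loc([0,\infty),L^p(\R^2))$ by a standard compactness argument (solve for smooth, compactly supported approximations of the data and of $b$, using assumption $b)$ of \eqref{0} to keep $b\omega$ locally integrable, and pass to the limit weakly in $L^p_\loc$). Then I mollify in space with a standard family $\{\rho_\varepsilon\}$, setting $\omega_\varepsilon := \omega * \rho_\varepsilon$, so that
$$\partial_t \omega_\varepsilon + \divv(b\,\omega_\varepsilon) = r_\varepsilon, \qquad r_\varepsilon := \divv(b\,\omega_\varepsilon) - (\divv(b\omega))*\rho_\varepsilon.$$
The technical heart of the argument is the DiPerna--Lions commutator lemma: from $b \in L^1_\loc([0,\infty),W^{1,1}_\loc)$ and $\omega \in L^\infty_\loc([0,\infty),L^p)$ one shows $r_\varepsilon \to 0$ in $L^1_\loc$ (for $p>1$ by direct difference-quotient estimates, for $p=1$ through an additional truncation and an equi-integrability argument). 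Since $\omega_\varepsilon$ is smooth in $x$, for any $\beta \in C^1(\R)$ with $\beta'$ bounded the chain rule gives
$$\partial_t \beta(\omega_\varepsilon) + \divv(b\,\beta(\omega_\varepsilon)) = \beta'(\omega_\varepsilon)\,r_\varepsilon + (\divv b)\bigl(\omega_\varepsilon \beta'(\omega_\varepsilon) - \beta(\omega_\varepsilon)\bigr),$$
where the second term vanishes by $\divv b \equiv 0$. Passing $\varepsilon \to 0$ yields the renormalized equation for $\omega$.

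For uniqueness, I take two solutions $\omega_1,\omega_2$ with the same initial datum; their difference $\omega := \omega_1 - \omega_2$ is again renormalized. Applying the renormalization with a smooth convex approximation of $r \mapsto |r|$, and testing against cutoffs $\phi_k$ of the type used in Lemma \ref{l:euler-vort-weak2}(ii) (satisfying $|\nabla \phi_k(x)| \leq (1+|x|)^{-1}$), the boundary flux term is dominated by $\frac{|b|}{1+|x|} \, |\omega|$, which is in $L^1_\loc$ by assumption $b)$ of \eqref{0}. Letting $k\to\infty$ and applying Gronwall gives $\|\omega(t,\cdot)\|_{L^1} \equiv 0$, hence $\omega_1 = \omega_2$.

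The Lagrangian structure then comes from the companion ODE theory: under hypotheses \eqref{0} with $\divv b = 0$, DiPerna--Lions (and later Ambrosio) produce a unique regular Lagrangian flow $X$ of $\dot{y}=b(t,y)$ preserving Lebesgue measure. The candidate $\tilde{\omega}(t,\cdot) := \omega_0 \circ X(t,\cdot)^{-1}$ is a weak solution to \eqref{1} (by the change of variables in the test-function formulation, exploiting measure preservation) and is renormalized, because $\beta(\tilde{\omega}(t,\cdot)) = \beta(\omega_0) \circ X(t,\cdot)^{-1}$ is itself of the same Lagrangian form with datum $\beta(\omega_0) \in L^p \cap L^\infty$. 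By the uniqueness statement just proven, $\omega = \tilde{\omega}$, giving the Lagrangian representation.

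The hard part will clearly be the commutator lemma together with the parallel construction and uniqueness of the regular Lagrangian flow, both of which genuinely require the Sobolev regularity $b \in W^{1,1}_\loc$; once these two ingredients are available, the existence, uniqueness, and transfer between the Eulerian and Lagrangian pictures are essentially routine bookkeeping.
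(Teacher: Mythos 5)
The paper does not prove this proposition at all: it is quoted verbatim from DiPerna--Lions \cite{DiPL89} (their existence/uniqueness theory for renormalized solutions of linear transport equations and the associated regular Lagrangian flow), so there is no internal proof to compare against. Your sketch correctly reconstructs the program of that reference: renormalization via mollification and a commutator estimate, uniqueness by renormalizing with (bounded, convex approximations of) the absolute value and cutting off with the weight $(1+|x|)^{-1}$ so that the flux term is controlled by assumption $b)$ of \eqref{0}, and identification of the unique renormalized solution with $\omega_0\circ X(t,\cdot)^{-1}$ for the measure-preserving regular Lagrangian flow $X$.

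One step as you wrote it does not go through under the stated hypotheses. The commutator lemma gives $r_\varepsilon\to0$ in $L^1_{\textup{loc}}$ only when $\nabla b\otimes\omega$ is locally integrable, i.e. when $b\in L^1_{\textup{loc}}([0,\infty),W^{1,\alpha}_{\textup{loc}})$ with $\tfrac1\alpha+\tfrac1p\leq1$; with only $b\in W^{1,1}_{\textup{loc}}$ and $\omega\in L^p$, $p<\infty$, the product $\nabla b\,\omega_\varepsilon$ need not even be locally integrable, and this failure is not confined to $p=1$, so no truncation or equi-integrability argument rescues it for general $p$. This is precisely why the proposition asserts existence and uniqueness \emph{within the class of renormalized solutions} rather than that every distributional solution is renormalized (the latter is what the paper's Proposition \ref{prop:p>2} buys for \eqref{eq1} when $p\geq2$, where $K*\omega\in W^{1,p}_{\textup{loc}}$ and $p'\leq p$ make the commutator estimate applicable). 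The gap is harmless for the statement itself, because the existence of a renormalized solution is already supplied by the last paragraph of your argument: $\tilde\omega=\omega_0\circ X(t,\cdot)^{-1}$ is renormalized by construction, since $\beta(\tilde\omega)=\beta(\omega_0)\circ X(t,\cdot)^{-1}$. You should therefore drop, or correctly restrict, the mollification step and let the flow representation carry the existence part; with that adjustment (and using bounded renormalizations in the uniqueness step, so that the far-field flux term is dominated by $\|\beta(\omega)\|_{L^\infty}\|b_1\|_{L^1(\{|x|\geq k\})}+\|b_2\|_{L^\infty}\|\beta(\omega)\|_{L^1(\{|x|\geq k\})}\to0$ after splitting $b/(1+|x|)=b_1+b_2\in L^1+L^\infty$), the proof is the one in \cite{DiPL89}.
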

	Since a weak solution $\omega \in L^\infty_{\textup{loc}}([0,\infty),L^p(\R^2))$ to \eqref{eq1} is also a weak solution to \eqref{1} with $b = K(\omega)$, the previous proposition implies the Lagrangianity of $\omega$, if \eqref{0} holds for $b = K(\omega)$ and if $\omega$ is a renormalized solution to \eqref{1} with $b = K(\omega)$ (note that $\divv K(\omega) \equiv 0$ holds, since $K$ is the Biot--Savart kernel).
	Both is true for $p\geq 2$, which leads to the following result, proven in \cite{L06_ES>1}.
	\begin{prop}\label{prop:p>2}
		Let $\omega_0 \in L^p(\R^2)$, $p \geq 2$. Then any weak solution $\omega \in L^\infty_{\textup{loc}}([0,\infty),L^p(\R^2))$ to \eqref{eq1} with initial datum $\omega_0$ is Lagrangian (in the sense of Definition \ref{def:Lagrang-sol}).
	\end{prop}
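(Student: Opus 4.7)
My strategy is the one sketched in the paragraph preceding the statement: apply the DiPerna--Lions theorem (Proposition \ref{prop:DiPerna-L}) to the linear transport equation \eqref{1} with the frozen coefficient $b := K(\omega)$. Since any weak solution $\omega$ of \eqref{eq1} is, by definition, a distributional solution of \eqref{1}, and since $\divv K(\omega)\equiv 0$, it suffices to check the hypotheses \eqref{0} and to show that $\omega$ is the renormalized solution of \eqref{1}; then the cited proposition delivers a regular Lagrangian flow $X$ for \eqref{eq_intro} together with the identification $\omega(t,\cdot)=\omega_0\circ X^{-1}(t,\cdot)$.

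Verification of \eqref{0} is routine from Lemma \ref{lem:K-prop}. For a.e.\ $t$ one has $\omega(t)\in (L^1\cap L^p)(\R^2)$ (the $L^1$-bound from mass conservation, cf.\ Lemma \ref{l:euler-vort-weak2}(ii)), so Lemma \ref{lem:K-prop}(ii) yields $K(\omega(t))\in W^{1,p}_{\loc}(\R^2,\R^2)\subseteq W^{1,1}_{\loc}(\R^2,\R^2)$ with a locally integrable-in-time bound, giving \eqref{0}(a). For \eqref{0}(b), the splitting $K=K_1+K_2$ of Lemma \ref{lem:K-prop}(i) and Young's inequality place $K(\omega)(t,\cdot)$ in $(L^1\cap L^p)(\R^2,\R^2)+L^\infty(\R^2,\R^2)$ locally uniformly in $t$, which forces $K(\omega)/(1+|x|)$ into the class required by \eqref{0}(b).

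The main step, and the place where $p\geq 2$ enters decisively, is to show that $\omega$ is a renormalized solution of \eqref{1}. I would follow the standard DiPerna--Lions commutator method: mollifying \eqref{1} with $\rho_\varepsilon$ produces a transport equation for $\omega_\varepsilon:=\omega*\rho_\varepsilon$ whose inhomogeneous term is the commutator
\[
r_\varepsilon := (b\cdot\nabla\omega)*\rho_\varepsilon - b\cdot\nabla(\omega*\rho_\varepsilon).
\]
The classical commutator lemma gives $r_\varepsilon\to 0$ in $L^1_{\loc}$ precisely when $\nabla b\in L^{p'}_{\loc}$ and $\omega\in L^p_{\loc}$ with $\tfrac{1}{p}+\tfrac{1}{p'}\leq 1$, which holds exactly because $p\geq 2$ forces $p'\leq p$; crucially, the flux $b\,\omega = K(\omega)\,\omega$ is integrable by Lemma \ref{lem:K-prop}(iii), which is what justifies writing the mollified equation in the first place. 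Composing with any $\beta\in C^1(\R)$ of bounded derivative and passing to the limit then yields the renormalization property, after which Proposition \ref{prop:DiPerna-L} concludes. The real work in the proof is this commutator estimate; the verification of \eqref{0} is an assembly of the properties of $K$ collected in Lemma \ref{lem:K-prop}, and the final Lagrangian identity is then an immediate readout of the cited proposition.
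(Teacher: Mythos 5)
Your proposal is correct and follows essentially the same route as the paper, which does not prove Proposition \ref{prop:p>2} directly but obtains it from Proposition \ref{prop:DiPerna-L} by checking \eqref{0} for $b=K(\omega)$ and invoking the renormalization property of $L^p$-vorticities with $p\geq 2$, citing \cite{L06_ES>1} for the latter --- whose content is precisely the commutator argument you outline (Calder\'on--Zygmund gives $\nabla(K*\omega)\in L^p$, paired with $\omega\in L^p$, so the DiPerna--Lions condition $\tfrac1p+\tfrac1p\leq 1$ is exactly $p\geq 2$). Your elaboration of the verification of \eqref{0} and of the commutator step is consistent with the sketch preceding the statement and with Lemmas \ref{lem:K-prop} and \ref{p:renormalized}.
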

	\begin{rem}
		The previous result does not yield uniqueness of weak or renormalized solutions to the nonlinear equation \eqref{eq1}.
	\end{rem}
	For weak solutions in $L^p(\R^2)$ with $1 \leq p < 2$, or -- more generally -- for measure-valued initial data and solutions, there seems to be no general result of Lagrangianity. However, several special case results for low-integrability solutions exist. 
	Two frequently studied solution classes consist of those solutions obtained as limits of sequences $\omega_n$ with smooth initial data $\omega_n^0$, $n\geq 1$, converging to $\omega_0$, where $\omega_n$ are either exact solutions (ES) to \eqref{eq1} or to the Navier--Stokes equations with vanishing viscosity $\nu_n \to 0$ (VV). We call (ES)- and (VV)-solutions those weak solutions to \eqref{eq1} obtained as such limits, respectively. Without further details, we summarize results on the Lagrangianity of such solutions, as obtained in \cite{L06_ES>1,BBC16_ES=1,CS15_VV>1,CNSS17_VV=1}, as follows.
	
	%ES>1 ,ES1, VV>1, VV1:
	\begin{prop}\label{prop:ES+VS}
		Let $ 1\leq p <2$, and $\omega_0 \in L^p(\R^2)$. Then there exist (ES)- and (VV)-solutions to \eqref{eq1} which are Lagrangian.
	\end{prop}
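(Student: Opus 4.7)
The plan is to build the (ES)- and (VV)-solutions as limits of smooth approximations and then transfer the Lagrangianity of the approximants to the limit via stability of regular Lagrangian flows. Concretely, I would first pick $\omega_0^n \in C_c^\infty(\R^2)$ with $\omega_0^n \to \omega_0$ in $L^p(\R^2)$ and $\|\omega_0^n\|_{L^p} \le \|\omega_0\|_{L^p}$. For the (ES) branch, solve \eqref{eq1} with initial datum $\omega_0^n$ in the classical sense, producing a smooth solution $\omega_n$ together with the associated classical (hence measure-preserving) flow $X_n$ of the ODE $\dot{y}=K(\omega_n(t))(y)$; by construction $\omega_n(t,x) = \omega_0^n(X_n^{-1}(t,x))$. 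For the (VV) branch, solve the $2D$ Navier--Stokes vorticity equation with viscosity $\nu_n \downarrow 0$ and the same initial data; here the natural substitute for $X_n$ is the stochastic flow of the SDE with drift $K(\omega_n^{\nu_n})$ and diffusion $\sqrt{2\nu_n}$, which is again measure-preserving.

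Next I would extract uniform bounds. In both cases the vorticity $L^p$-norm is non-increasing in time (conserved for (ES), dissipated for Navier--Stokes), giving $\omega_n$ uniformly bounded in $L^\infty_{\loc}([0,\infty),L^1\cap L^p(\R^2))$ and, via Lemma~\ref{lem:K-prop}, $K*\omega_n$ bounded in $L^\infty_\loc([0,\infty),L^{p'}_\loc(\R^2,\R^2))$ for $p>4/3$ and in weaker spaces otherwise. Combined with the equation, this yields enough time regularity to deduce, up to a subsequence, $\omega_n \rightharpoonup \omega$ in a suitable weak topology, with $\omega \in L^\infty_\loc([0,\infty),L^p(\R^2))$. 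The delicate point is passing to the limit in the nonlinear flux $(K*\omega_n)\omega_n$: for $1<p<2$ this uses the div-curl/compactness structure of the Biot--Savart kernel, whereas for $p=1$ one invokes the Delort-type concentration-cancellation/symmetrization arguments underlying \cite{BBC16_ES=1,CNSS17_VV=1}.

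The Lagrangianity of the limit then rests on showing that the approximating flows $X_n$ converge (locally in measure on $\R^2$, uniformly on compact time intervals) to a regular Lagrangian flow $X$ of the limit velocity field $v=K*\omega$; once this is established, the pointwise identity $\omega_n(t,\cdot) = \omega_0^n\circ X_n^{-1}(t,\cdot)$ passes to the limit to give $\omega(t,\cdot) = \omega_0\circ X^{-1}(t,\cdot)$, which is Definition~\ref{def:Lagrang-sol}. For $1<p<2$ the convergence $X_n\to X$ follows from the quantitative Crippa--De~Lellis stability estimates applied to velocities in $L^\infty_\loc([0,\infty),W^{1,p}_\loc(\R^2,\R^2))$, as in \cite{L06_ES>1,CS15_VV>1}. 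For $p=1$, the velocity loses Sobolev regularity, and one has to work inside the broader theory of Bouchut--Crippa (for (ES), \cite{BBC16_ES=1}) and its parabolic counterpart (for (VV), \cite{CNSS17_VV=1}), where Lagrangian flows are built and shown stable for velocity fields obtained as singular integrals of $L^1$ densities.

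The main obstacle is precisely this last step in the borderline case $p=1$: the field $K*\omega$ is not in any $W^{1,r}_\loc$, so classical DiPerna--Lions/Crippa--De~Lellis stability fails and one must exploit the very specific $L\log L$-type/BMO structure inherited from the singular kernel $K$ to obtain compactness of the flows. In the (VV) setting this is somewhat softened by the viscous regularization, which provides $L^2_\loc$ control of $\nabla v_n^{\nu_n}$ uniformly in $\nu_n$ along the approximation, but one still must prevent concentration in the limit; in the (ES) setting no such regularization is available, and the argument genuinely relies on the kernel-specific estimates of \cite{BBC16_ES=1}.
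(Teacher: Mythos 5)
The paper does not actually prove this proposition: it is stated explicitly as a summary, ``without further details,'' of results obtained in the four cited references (exact-solution approximation for $p>1$ and for $p=1$, vanishing viscosity for $p>1$ and for $p=1$). Your outline is a faithful reconstruction of the strategy of those references --- smooth approximation of the initial datum, uniform $L^1\cap L^p$ bounds, compactness, and stability of regular Lagrangian flows (quantitative Crippa--De Lellis estimates for $1<p<2$, where $v=K*\omega\in W^{1,p}_{\textup{loc}}$ with $p>1$, and the Bouchut--Crippa theory of flows for singular integrals of $L^1$ densities at $p=1$) --- so at the level at which the paper treats the statement there is nothing to fault. Be aware, however, that your proposal is not an independent proof: every genuinely hard step (compactness of the flows at $p=1$, passage to the limit in the nonlinear flux for small $p$ via symmetrization/concentration--cancellation) is itself deferred to the same four papers the proposition cites.

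One concrete inaccuracy sits in the (VV) branch. The viscous vorticity is not a deterministic push-forward $\omega_0^n\circ X_n^{-1}$; with the stochastic flow one only obtains a representation involving an expectation over Brownian paths, so there is no pointwise transport identity to ``pass to the limit'' as you describe. The cited vanishing-viscosity arguments proceed differently: they show that the limit $\omega$ is a renormalized solution of the \emph{linear} transport equation driven by the limit velocity $v=K*\omega$, and then invoke the uniqueness and Lagrangianity of renormalized solutions for that linear problem (in the spirit of Proposition \ref{prop:DiPerna-L} for $p>1$, and the Bouchut--Crippa well-posedness of the regular Lagrangian flow for $p=1$) to conclude. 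If you insist on the flow-stability viewpoint, you should compare the deterministic regular Lagrangian flows of the viscous velocity fields $v^{\nu_n}$ and separately show that the viscous correction vanishes; the stochastic-flow route requires an additional argument that the randomness disappears in the limit, which your sketch does not supply.
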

	Beyond these very interesting special results, the Lagrangian property seems open (and cannot be expected) for general low-integrability weak solutions or measure-valued solutions and measure-initial data. The following main result of this section establishes a general representation of weak solutions as Lagrangian solutions in a suitably generalized sense.
	\begin{theorem}\label{thm1}
		Let $\omega :(0,\infty) \to \mathcal{M}^+_b(\R^2)$ be a weak solution to \eqref{eq1} with initial datum $\omega_0 \in \Mscr^+_b(\R^2)$, such that $\esssup_{t\in (0,T)}\omega(t)(\R^2) < \infty$ for all $T>0$ and \eqref{L1-int-sol} holds.
		Then there is a weakly continuous $dt$-version $\tilde{\omega}$ of $\omega$ which is a superposition solution, i.e. there is a nonnegative finite Borel measure $\eta$  on $C([0,\infty),\R^2)$, concentrated on absolutely continuous integral solutions to \eqref{eq_intro} with $\tilde{\omega}$ instead of $\omega$,
		such that 
		\begin{equation}\label{111}
			\eta \circ \pi_t^{-1} = \tilde{\omega}(t), \quad \forall t \geq 0.
		\end{equation}
	\end{theorem}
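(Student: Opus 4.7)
The strategy is to reduce to a probability solution of a first–order nonlinear Fokker–Planck equation and then invoke the nonlinear superposition principle recalled in Remark \ref{rem:SP-princ}, combined with the identification of first–order martingale solutions in Remark \ref{rem:mg-sol-det-case}.

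First, I would fix the weakly continuous $dt$–version $\tilde\omega$ of $\omega$ provided by Lemma \ref{l:euler-vort-weak2}(ii), which also yields constant total mass $c:=\tilde\omega(t)(\R^2)=\omega_0(\R^2)$ for every $t\ge 0$. The trivial case $c=0$ is handled by $\eta=0$, so I may assume $c>0$ and normalize by defining $\mu_t:=c^{-1}\tilde\omega(t)$. Then $(\mu_t)_{t\ge 0}$ is a weakly continuous probability–measure–valued curve; by linearity of $K$ in its argument, $\tilde\omega$ solves \eqref{eq2} in the sense of Definition \ref{def:weak-sol} if and only if $(\mu_t)$ solves the nonlinear FPE with $a\equiv 0$ and drift
\begin{equation*}
\tilde b(t,\nu,x)\;:=\;c\,K(\nu)(x),\qquad \nu\in\Mscr_b^+(\R^2),
\end{equation*}
in the distributional sense of Section \ref{sect:FPE}.

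Next I would verify the integrability hypothesis of the nonlinear superposition principle (Remark \ref{rem:SP-princ}) for $(\mu_t)$ and $\tilde b$. Since $a\equiv 0$, one only needs
\begin{equation*}
\int_0^T\!\!\int_{\R^2}\frac{|\tilde b(t,\mu_t,x)\cdot x|}{1+|x|^2}\,d\mu_t(x)\,dt
\;\le\;\int_0^T\!\!\int_{\R^2}\frac{|K(\tilde\omega(t))(x)|}{1+|x|}\,d\mu_t(x)\,dt
\;=\;c^{-1}\!\!\int_0^T\!\!\int_{\R^2}\frac{|K(\tilde\omega(t))(x)|}{1+|x|}\,d\tilde\omega(t)\,dt,
\end{equation*}
which is finite by assumption \eqref{L1-int-sol}. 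Remark \ref{rem:SP-princ} then furnishes a probability measure $Q$ on $C([0,\infty),\R^2)$ solving the nonlinear martingale problem associated with $(0,\tilde b)$ and satisfying $Q\circ\pi_t^{-1}=\mu_t$ for all $t\ge 0$. By Remark \ref{rem:mg-sol-det-case}, $Q$ is concentrated on absolutely continuous integral curves of $\dot y(t)=\tilde b(t,\mu_t,y(t))=c\,K(\mu_t)(y(t))=K(\tilde\omega(t))(y(t))$, i.e.\ exactly on solutions of \eqref{eq_intro} with $\tilde\omega$ in place of $\omega$.

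Finally, I would rescale back: set $\eta:=c\,Q\in\Mscr_b^+(C([0,\infty),\R^2))$. This $\eta$ is still concentrated on solution curves of \eqref{eq_intro}, and $\eta\circ\pi_t^{-1}=c\,\mu_t=\tilde\omega(t)$ for every $t\ge 0$, which is \eqref{111}. The main obstacle is really a bookkeeping one: verifying that Definition \ref{def:weak-sol} (with the weak continuity of $\tilde\omega$ from Lemma \ref{l:euler-vort-weak2}(ii)) matches the distributional formulation of nonlinear FPEs used in Section \ref{sect:FPE} after the $c^{-1}$–rescaling, and that the mild integrability \eqref{L1-int-sol} is exactly the hypothesis needed by the superposition principle in the first–order setting; once this is done, the conclusion follows directly from the tools already in place.
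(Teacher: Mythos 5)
Your proof is correct and follows essentially the same route as the paper's: pass to the weakly continuous $dt$-version via Lemma \ref{l:euler-vort-weak2}(ii), then apply the nonlinear superposition principle of Remark \ref{rem:SP-princ} together with the identification of first-order martingale solutions in Remark \ref{rem:mg-sol-det-case}; your explicit normalization to a probability solution and the verification of the integrability hypothesis merely spell out details the paper leaves implicit. (One cosmetic point: the pointwise bound $|x|/(1+|x|^2)\le 1/(1+|x|)$ underlying your displayed inequality fails for $|x|>1$, though only up to a universal constant, so the finiteness conclusion is unaffected.)
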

	\begin{rem}
		If $\omega$ is weakly continuous, then $\omega = \tilde{\omega}$. Note that \eqref{111} holds also $dt$-a.s with $\omega$ instead of $\tilde{\omega}$ and that \eqref{eq_intro} has the same integral solutions for $\tilde{\omega}$ and $\omega$.
	\end{rem}
	The proof uses the superposition principle for continuity equations and its nonlinear extension. As said in Section \ref{sect:FPE}, the latter was proven in the more general case of second-order Fokker--Planck equations \cite{BR18,BR18_2,BRS19-SPpr}, but, as it seems to us, has not explicitly been applied to the first-order case.
	\begin{proof}[Proof of Theorem \ref{thm1}]
		Assume first that $\omega$ as in the assertion is weakly continuous. Then the assertion follows from the nonlinear superposition principle \cite[Sect.2]{BR18_2} and Remark \ref{rem:mg-sol-det-case}, applied under the relaxed assumptions of \cite[Thm.1.1]{BRS19-SPpr}, with $\omega = \tilde{\omega}$. For the general case, it suffices to prove that $\omega$ as in the assertion has a weakly continuous $dt$-version to which the first part of the proof can be applied. This follows from Lemma \ref{l:euler-vort-weak2} (ii).
	\end{proof}
	\begin{rem}
		The proof of Lemma \ref{l:euler-vort-weak2} (ii) suggests that assumption \eqref{L1-int-sol} is essentially sharp in the sense that if $1+|x|$ in \eqref{L1-int-sol} is replaced by $1+|x|^\alpha$ for $\alpha>1$, then in order to repeat the proof, one would need a sequence $(\varphi_k)_{k \in \N} \subseteq C^1_c(\R^2)$ such that $|\nabla \varphi_k(x)|\leq (1+|x|^\alpha)^{-1}$ for all $x$ and $\varphi_k = 1$ on $B_k(0)$. But such a sequence does not exist, since these assumptions on $\varphi_k$ and $\nabla \varphi_k$ seem incompatible with the compact support assumption, at least for large $k$. Indeed, for such a sequence we would have $\varphi_k((0,k)) = 1$ and $\varphi_k((0,N_k)) = 0$ for some $k< N_k \in \N$ (here $(0,k)$ and $(0,N_k)$ denote points in $\R^2$ on the $x_2-$axis). We find
        \begin{align*}
            1 = |\varphi_k((0,k)) - \varphi((0,N_k))| \leq \sum_{\ell = k}^{N_k-1}|\varphi_k((0,\ell)) - \varphi((0,\ell+1))| \leq \sum_{\ell \geq k} \ell^{-\alpha}
        \end{align*}
        for all $k \in \N$, which requires $\alpha \leq 1$. For the final inequality we used the mean value theorem, Cauchy--Schwarz inequality, the assumed pointwise estimate for each $|\nabla \varphi_k|$ and $\max_{x \in (\ell, \ell+1)} (1+|x|^\alpha)^{-1} = (1+|\ell|^\alpha)^{-1}$.
	\end{rem}
	Finally, we state a probabilistic representation of probability measure-valued solutions. 
	\begin{kor}\label{cor:prob-repr}
		Suppose the assumptions of Theorem \ref{thm1} hold and, in addition, $\tilde{\omega}(t)\in \Pscr(\R^2)$ for all $t \geq 0$. Then there is a stochastic process $X = (X_t)_{t\geq 0}$ on a probability space $(\Omega, \Fscr, P)$ such that $P$-a.e. of its paths is an integral solution to \eqref{eq_intro} with $\tilde{\omega}$ instead of $\omega$, and $P\circ X^{-1} = \eta$. In particular, 
		$$P \circ X_t^{-1} = \tilde{\omega}(t),\quad \forall t \in [0,\infty).$$
	\end{kor}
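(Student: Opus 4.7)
The plan is to construct $X$ as the canonical process on path space endowed with the measure $\eta$ from Theorem \ref{thm1}. First, I would observe that under the added hypothesis $\tilde\omega(t)\in\Pscr(\R^2)$ for all $t\geq 0$, the measure $\eta$ is in fact a probability measure: indeed, since $\eta\circ\pi_0^{-1}=\tilde\omega(0)$ by \eqref{111}, and $\pi_0$ is defined on the entire path space, we have $\eta(C([0,\infty),\R^2))=\tilde\omega(0)(\R^2)=1$.

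With that in hand, I would set $(\Omega,\Fscr,P):=\bigl(C([0,\infty),\R^2),\Bscr(C([0,\infty),\R^2)),\eta\bigr)$ and define the process $X=(X_t)_{t\geq 0}$ by the canonical projections $X_t(\gamma):=\pi_t(\gamma)=\gamma(t)$ for $\gamma\in\Omega$. Then the map $X:\Omega\to C([0,\infty),\R^2)$ is the identity, so $P\circ X^{-1}=\eta$ tautologically. The path property is also immediate: Theorem \ref{thm1} asserts that $\eta$ is concentrated on absolutely continuous integral solutions to \eqref{eq_intro} (with $\tilde\omega$ in place of $\omega$), so $P$-a.e.\ $\gamma\in\Omega$, i.e.\ $P$-a.e.\ path of $X$, is such a solution.

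Finally, to deduce the marginal identity, I would compose with $\pi_t$: for every $t\geq 0$,
$$P\circ X_t^{-1}=P\circ(\pi_t\circ X)^{-1}=(P\circ X^{-1})\circ\pi_t^{-1}=\eta\circ\pi_t^{-1}=\tilde\omega(t),$$
where the last equality is \eqref{111}. There is essentially no obstacle in this argument; the only point worth verifying carefully is that $\eta$ has unit mass, which rests on the probability assumption on $\tilde\omega(t)$ together with the identity $\eta\circ\pi_0^{-1}=\tilde\omega(0)$ already furnished by Theorem \ref{thm1}.
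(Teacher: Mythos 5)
Your argument is correct. The only point that genuinely needs checking beyond what Theorem \ref{thm1} already provides is that $\eta$ has unit mass, and you handle this correctly via $\eta(C([0,\infty),\R^2))=(\eta\circ\pi_0^{-1})(\R^2)=\tilde\omega(0)(\R^2)=1$; after that, taking $(\Omega,\Fscr,P)=(C([0,\infty),\R^2),\Bscr(C([0,\infty),\R^2)),\eta)$ with $X$ the canonical process makes $P\circ X^{-1}=\eta$ tautological, the path property is inherited from the concentration statement in Theorem \ref{thm1}, and the marginal identity follows by composing with $\pi_t$ and \eqref{111}. The paper reaches the same conclusion by a slightly less elementary route: it regards $\eta$ as a probability solution of the nonlinear martingale problem with generator $(Lf)(t,x)=(K*\tilde\omega(t))(x)\cdot\nabla f(x)$ and invokes the general correspondence (citing Stroock) between martingale solutions and weak solutions of the associated (here deterministic) differential equation to produce a process with path law $\eta$. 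Your canonical-space construction is the concrete realization underlying that abstract correspondence, and in this purely deterministic, first-order setting it is self-contained and arguably preferable; the paper's phrasing buys consistency with the martingale-problem framework used throughout Section \ref{sect:NL-MP}, but adds no content needed for this particular corollary.
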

	\begin{proof}
		With the additional assumption, $\eta$ from Theorem \ref{thm1} can be constructed as a probability solution of the nonlinear martingale problem with operator $(Lf)(t,x):= (K*\tilde{\omega}(t))(x)\cdot \nabla f(x)$. For every such $\eta$ there is a stochastic process $X$ on a probability space, which solves the corresponding (stochastic) differential equation with path law $\eta$, see \cite{Stroock87}. In the present case, this differential equation is deterministic and given by \eqref{eq_intro} with $\tilde{\omega}$ instead of $\omega$.
	\end{proof}
	
	\section{Nonlinear Markov processes associated with solutions to \texorpdfstring{$2D$}{2D} vorticity Euler equation}\label{sect:NL-MP}
	%I refer to my def. \ref{def:weak-sol} in this section. If Marco uses another notion of solution to which he refers in his lemmas/theorems, we have to make it congruent.
	
	%[MR-unipi]: Here is my definition
	%
	%\begin{dfn}[weak solution]\label{def:euler-vort-weak}
		%  Let $\omega\in L^\infty(0,\infty;L^1(\R^2)\cap L^p(\R^2))$, with $p\geq\frac43$, and $v=K\star\omega$. The function $\omega$ is a weak solution of \eqref{eq2} with initial condition $\zeta$ if for every $\varphi\in C^\infty_c([0,\infty)\times\R^2)$,
		%  \begin{multline}\label{eq:euler-vort-weak}
			%      \int_{\R^2}\zeta(x)\varphi(0,x)\,dx %+\\
			%          + \int_0^\infty\int_{\R^2}\bigl(
			%            \omega(s,y)v(s,y)\cdot\nabla\varphi(s,y)
			%            + \omega(s,y)\partial_t\varphi(s,y)\bigr)\,dy\,ds
			%        = 0.
			%  \end{multline}
		%\end{dfn}
	
	\subsection{Nonlinear Markov processes}\label{subsect:NL-MP}
	We use the notation
	$$\Omega := C([0,\infty),\R^d), \quad \Fscr_t := \sigma(\pi_r,0\leq r \leq t).$$
	It is a classical result that the probability measure-valued solutions of a well-posed time-homogeneous linear FPE, i.e. \eqref{FPE} with coefficients only depending on $x$, are the transition kernel of a uniquely determined Markov process $\{\mathbb{P}_{x}\}_{x \in \R^d}$. This Markov process consists of the unique solution path laws $\mathbb{P}_{x}$ of the corresponding non-distribution dependent SDE \eqref{DDSDE}, with coefficients only depending on $X_t$. 
	
	This relation to Markov processes is not generally true for nonlinear equations \eqref{FPE} and distribution-dependent SDEs \eqref{DDSDE}, see \cite{R./Rckner_NL-Markov22}. In \cite{R./Rckner_NL-Markov22}, the following notion of nonlinear Markov processes was studied. We restrict the presentation here to the time-homogeneous case. 
	\begin{dfn}\label{def:NL-Markov-process}
		Let $\Pscr_0 \subseteq \Pscr$. A \textit{nonlinear Markov process} is a family $\{\mathbb{P}_{\zeta}\}_{\zeta\in\Pscr_0}$ of probability measures on $\Omega$ such that
		\begin{enumerate}
			\item[(i)] $\mathbb{P}_{\zeta}\circ\pi_t^{-1} =: \mu^{\zeta}_t\in \mathcal{P}_0$ for all $0 \leq t, \zeta \in \Pscr_0$.
			\item[(ii)] The \textit{nonlinear Markov property} holds: for all $0\leq s, t$, $\zeta \in \Pscr_0$ and $A\in \Bscr(\R^d)$
			\begin{equation}\label{Markov-prop}\tag{MP}
				\mathbb{P}_{\zeta}(\pi_{t+s} \in A|\mathcal{F}_{s})(\cdot) = p_{\mu^{\zeta}_s,\pi_s(\cdot)}(\pi_{t}\in A) \quad \mathbb{P}_{s,\zeta}-\text{a.s.},
			\end{equation}
			where $p_{\mu^{\zeta}_s,y}, y \in \R^d$, is a regular conditional probability kernel of $\mathbb{P}_{\mu^{\zeta}_s}[\,\,\cdot\, \,| \pi_0=y]$, $y \in \R^d$.
		\end{enumerate}
	\end{dfn}
	The difference to classical Markov processes is the dependence on $\mu^\zeta_s$ on the RHS of \eqref{Markov-prop}.
	The name \emph{nonlinear} Markov process is due to the fact that in Definition \ref{def:NL-Markov-process} the map $\Pscr_0 \ni \zeta \mapsto \mu^{\zeta}_t$ is, in general, not linear on its domain, even if $\Pscr_0$ is a convex set (which is not assumed). These processes share many features with classical Markov processes, for instance: they model memoryless evolutions in time, their path laws are uniquely determined by a family of one-dimensional kernels (but different from the classical case), and their marginals satisfy the \textit{flow property}
	\begin{equation}\label{eq:flow-prop}
		\mu^\zeta_t \in \Pscr_0,\quad \mu^{\zeta}_{t+s} = \mu^{\mu^{\zeta}_s}_t,\quad \forall0\leq s,t,\,\zeta \in \Pscr_0.
	\end{equation}
	this identity is not identical to the Chapman--Kolmogorov equations. Indeed, postulating the latter does not even make sense when $\Pscr_0$ does not contain all Dirac measures (which is usually the case for our results in this paper). But even when $\Pscr_0 = \Pscr$, \eqref{eq:flow-prop} implies the Chapman--Kolmogorov equations only if $\mu^\zeta_t = \int \mu^{\delta_x}_t \, d\zeta(x)$ for all $t \geq 0$ and $\zeta \in \Pscr$. But the latter need not hold ($x\mapsto \mu^{\delta_x}_t$ need not even be measurable).
	The class of nonlinear Markov processes contains the class of classical ones. For these claims and further details, we refer to \cite{R./Rckner_NL-Markov22}, which contains the following connection between \eqref{FPE}, \eqref{DDSDE} and nonlinear Markov processes. For a weakly continuous curve $\nu: [0,\infty)\to \Pscr$, denote by $M^{\zeta}_{\nu}$ the convex set of weakly continuous probability solutions $\mu$ to the \emph{linear} FPE
	$$	\partial_t \mu_t = \partial^2_{ij}\big(a_{ij}(\nu_t,x)\mu_t\big) - \partial_i \big(b_i(\nu_t,x)\mu_t\big),\quad \mu_0 = \zeta,\quad t \geq 0,$$
	such that $[(t,x)\mapsto a_{ij}(\nu_t,x)], [(t,x)\mapsto b_i(\nu_t,x)] \in L^1([0,T]\times \mathbb{R^d};d\mu_t dt)$ for all $T>0$, and let $M^{\zeta}_{\nu,\textup{ex}}$ be the set of extreme points of $M^{\zeta}_{\nu}$ (i.e. those elements $\nu^*$ from $M^\zeta_\nu$ for which $\nu^* = \alpha \nu^1 + (1-\alpha) \nu^2$ with $\alpha \in (0,1)$ implies $\nu^1 = \nu^2$ for any $\nu^1, \nu^2 \in M^\zeta_\nu$).
	
	\begin{theorem}[Thm.3.8 in \cite{R./Rckner_NL-Markov22}]\label{theorem1}
		Let $\Pscr_0 \subseteq\Pscr$ and 
		$$\{\mu^{\zeta}\}_{\zeta\in\Pscr_0},\quad \,\mu^{\zeta} = (\mu^{\zeta}_t)_{t\geq 0},\quad \mu^{\zeta}_0 = \zeta,$$
		be a probability measure-valued weakly continuous solution family to \eqref{FPE} with the flow property (called \emph{solution flow}), such that $\mu^{\zeta} \in M^{\zeta}_{\mu^{\zeta},\textup{ex}}$ for each $\zeta \in \Pscr_0$.
		
		Then the path laws $\mathbb{P}_{\zeta}$, $\zeta \in \Pscr_0$, of the unique weak solutions to \eqref{DDSDE} with one-dimensional time marginals $(\mu^{\zeta}_t)_{t \geq 0}$ form a nonlinear Markov process. 
	\end{theorem}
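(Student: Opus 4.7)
The plan is to proceed in three stages: construct candidate path laws via the nonlinear superposition principle of Remark \ref{rem:SP-princ}, use the extreme-point hypothesis to force their uniqueness, and then verify \eqref{Markov-prop} by combining the flow property \eqref{eq:flow-prop} with the uniqueness of a suitable \emph{linearized}, frozen-curve martingale problem.

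For the first step, I fix $\zeta \in \Pscr_0$ and freeze the nonlinear coefficients along $\mu^\zeta$, setting $\tilde a_{ij}(t,x) := a_{ij}(\mu^\zeta_t,x)$ and $\tilde b_i(t,x):= b_i(\mu^\zeta_t,x)$. The curve $(\mu^\zeta_t)_{t \geq 0}$ is then a weakly continuous probability solution to the \emph{linear} time-inhomogeneous FPE with coefficients $(\tilde a,\tilde b)$, and the integrability assumption defining $M^\zeta_{\mu^\zeta}$ gives exactly the hypothesis needed for the Ambrosio--Figalli--Trevisan superposition principle in the relaxed form of \cite{BRS19-SPpr}. Hence there exists $\mathbb{P}_\zeta \in \Pscr(\Omega)$ solving the linear martingale problem for $(\tilde a,\tilde b)$ with one-dimensional marginals $(\mu^\zeta_t)_{t\geq 0}$; by the martingale--SDE equivalence, $\mathbb{P}_\zeta$ is a weak path law of \eqref{DDSDE} with initial law $\zeta$.

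For the second step, I argue that the extreme-point assumption $\mu^\zeta\in M^{\zeta}_{\mu^\zeta,\textup{ex}}$ upgrades to uniqueness of $\mathbb{P}_\zeta$ among martingale solutions of the frozen linear problem with these marginals. Given two such path laws $\mathbb{P}_\zeta,\mathbb{Q}$, any convex combination still has marginals in $M^\zeta_{\mu^\zeta}$ and, by the direct part of Step 1, arises as the path law of a superposition solution with marginal curve still equal to $\mu^\zeta$. A Stroock--Varadhan style extreme-point identification (the analogue at path-law level of extremality of $\mu^\zeta$ in $M^\zeta_{\mu^\zeta}$) then forces $\mathbb{Q}=\mathbb{P}_\zeta$. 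Applied to the frozen problem with initial distribution $\delta_y$ for arbitrary $y$, this also yields a jointly measurable family of unique solutions $\mathbb{P}_{\mu^\zeta_s,y}$, $y\in\R^d$, which serve as the required regular conditional kernel $p_{\mu^\zeta_s,y}$.

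The third and main step verifies \eqref{Markov-prop}. Fix $s\geq 0$ and let $\Theta_s\colon \Omega\to\Omega$, $\Theta_s(\omega)(t)=\omega(s+t)$. Take a regular conditional probability version of $\mathbb{P}_\zeta$ given $\Fscr_s$, denoted $\mathbb{P}_\zeta^\omega$, and push it forward by $\Theta_s$. For $\mathbb{P}_\zeta$-a.e.\ $\omega$, the measure $\mathbb{P}_\zeta^\omega\circ\Theta_s^{-1}$ is a martingale solution of the linear problem with coefficient curve $t\mapsto \mu^\zeta_{s+t}$ and initial datum $\delta_{\pi_s(\omega)}$. By the flow property $\mu^\zeta_{s+\cdot}=\mu^{\mu^\zeta_s}_\cdot$, this is the \emph{same} linear problem as the one solved by $p_{\mu^\zeta_s,\pi_s(\omega)}$ produced in Step 2. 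Uniqueness from Step 2 then identifies the two, which is exactly \eqref{Markov-prop}.

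The main obstacle is Step 3: all of the above identifications must be made jointly measurably, so that the individual uniqueness statements assemble into an equality of Markov kernels up to a $\mathbb{P}_\zeta$-null set. Concretely, one must show that the map $y\mapsto \mathbb{P}_{\mu^\zeta_s,y}$ from Step 2 is a genuine regular conditional probability of $\mathbb{P}_{\mu^\zeta_s}$ given $\pi_0$, and that the disintegration $\omega\mapsto \mathbb{P}_\zeta^\omega\circ \Theta_s^{-1}$ falls in this measurable family for $\mathbb{P}_\zeta$-a.e.\ $\omega$. This requires a careful, measurable adaptation of the extreme-point/uniqueness argument used in Step 2, and is the technical heart of the proof.
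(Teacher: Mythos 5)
First, note that the paper itself contains no proof of this statement: Theorem \ref{theorem1} is imported verbatim as Thm.~3.8 of \cite{R./Rckner_NL-Markov22}, so the only meaningful comparison is with the proof given in that reference. Your overall architecture --- superposition principle to build $\mathbb{P}_\zeta$, extremality to force uniqueness of the path law with marginals $(\mu^\zeta_t)_{t\geq 0}$, then conditioning plus the flow property \eqref{eq:flow-prop} to verify \eqref{Markov-prop} --- does match the strategy there in outline.

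However, there is a genuine gap in your Step~2 which then undermines Step~3 as you set it up. By Lemma \ref{lem}, extremality of $\mu^\zeta$ in $M^\zeta_{\mu^\zeta}$ is equivalent to uniqueness of the linearized FPE only within the restricted class $\Ascr_{\leq}(\mu^\zeta)$ of curves dominated by $C\mu^\zeta_t$. This does yield uniqueness of the martingale solution \emph{with the prescribed marginals} $(\mu^\zeta_t)_{t\geq 0}$, via a Stroock--Varadhan-type reweighting: two distinct path laws with identical marginals would produce, after conditioning on an event where they differ, two distinct marginal curves dominated by a multiple of $\mu^\zeta$. It does \emph{not} yield well-posedness of the frozen martingale problem started from $\delta_y$: a solution from $\delta_y$ is in general not dominated by $C\mu^\zeta_t$, so no uniqueness is available for it, and your claim that Step~2 ``also yields a jointly measurable family of unique solutions $\mathbb{P}_{\mu^\zeta_s,y}$'' is unsupported. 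Consequently Step~3 cannot identify $\mathbb{P}^\omega_\zeta\circ\Theta_s^{-1}$ with $p_{\mu^\zeta_s,\pi_s(\omega)}$ by appealing to uniqueness of the $\delta_y$-problem. The correct route is the one the theorem statement itself hints at: define $p_{\mu^\zeta_s,y}$ as the disintegration with respect to $\pi_0$ of the \emph{unique} path law $\mathbb{P}_{\mu^\zeta_s}$; observe that $\mathbb{P}_\zeta\circ\Theta_s^{-1}$, which is the mixture of the shifted conditional laws $\mathbb{P}^\omega_\zeta\circ\Theta_s^{-1}$, is a martingale solution with marginals $\mu^\zeta_{s+t}=\mu^{\mu^\zeta_s}_t$ by the flow property and hence coincides with $\mathbb{P}_{\mu^\zeta_s}$ by the restricted uniqueness; and then compare the two disintegrations of this single measure over $\pi_0$. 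That last identification requires showing that $\omega\mapsto\mathbb{P}^\omega_\zeta\circ\Theta_s^{-1}$ depends on $\omega$ only through $\pi_s(\omega)$ up to a $\mathbb{P}_\zeta$-null set, which is a substantive step and not the mere measurability bookkeeping your closing paragraph suggests.
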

	In this case, each $\mathbb{P}_\zeta$ is the unique solution with marginals $(\mu^\zeta_t)_{t\geq 0}$ to the nonlinear martingale problem associated with $a_{ij}$ and $b_i$.
	
	The assertion contains an uniqueness result for weak solutions to \eqref{DDSDE}, which we do not discuss here (cf. \cite{R./Rckner_NL-Markov22}). The existence of a weak solution to \eqref{DDSDE} with marginals $(\mu^{\zeta}_t)_{t\geq 0}$ follows from the superposition principle, see Remark \ref{rem:SP-princ}. The following lemma provides a checkable criterion for the extremality condition in Theorem \ref{theorem1}.
	\begin{lem}[Lem.3.5 in \cite{R./Rckner_NL-Markov22}]\label{lem}
		$\mu^{\zeta} \in M^{\zeta}_{\mu^\zeta,\textup{ex}}$ holds if and only if $\mu^{\zeta}$ is the unique curve in $M^{\zeta}_{\mu^{\zeta}}\cap \Ascr_{\leq}(\mu^{\zeta})$, where
		$$\Ascr_{\leq}(\mu^{\zeta}) :=\big\{(\eta_t)_{t \geq 0}\in C([0,\infty),\Pscr): \eta_t \leq C \mu^{\zeta}_t \,\forall  t\geq 0 ,\textup{ for some }C>0\big\},$$
		where $C([0,\infty),\mathcal{P})$ is considered with the topology of weak convergence of measures.
	\end{lem}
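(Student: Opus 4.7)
The plan is to exploit the fact that $M^{\zeta}_{\mu^\zeta}$ is a convex set of probability solutions to a \emph{linear} equation (with the nonlinear coefficient frozen along $\mu^\zeta$), so the claim becomes a Krein--Milman-type characterization of extremality by domination. Throughout I only use that $M^\zeta_{\mu^\zeta}$ is convex under pointwise-in-time convex combinations of measures, that $\Ascr_{\leq}(\mu^\zeta)$ is stable under such combinations, and that all curves involved have total mass $1$ at every time.

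For the easy implication ($\Leftarrow$), assume $\mu^\zeta$ is the unique element of $M^{\zeta}_{\mu^\zeta}\cap \Ascr_{\leq}(\mu^\zeta)$ and suppose $\mu^\zeta=\tfrac{1}{2}(\mu^{(1)}+\mu^{(2)})$ with $\mu^{(i)}\in M^{\zeta}_{\mu^\zeta}$. Since each $\mu^{(i)}_t\geq 0$, the decomposition gives $\mu^{(i)}_t\leq 2\mu^\zeta_t$ for all $t\geq 0$, so $\mu^{(i)}\in \Ascr_{\leq}(\mu^\zeta)$ with constant $C=2$. The uniqueness hypothesis then forces $\mu^{(1)}=\mu^{(2)}=\mu^\zeta$, i.e.\ $\mu^\zeta\in M^{\zeta}_{\mu^\zeta,\textup{ex}}$.

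For the harder implication ($\Rightarrow$), let $\eta\in M^{\zeta}_{\mu^\zeta}\cap \Ascr_{\leq}(\mu^\zeta)$ with $\eta_t\leq C\mu^\zeta_t$ for all $t\geq 0$. Evaluating on $\R^d$ yields $1\leq C$; the case $C=1$ is immediate since then $\eta_t\leq \mu^\zeta_t$ and both are probability measures. For $C>1$ I would define
\begin{equation*}
\nu_t := \tfrac{C}{C-1}\,\mu^\zeta_t - \tfrac{1}{C-1}\,\eta_t, \qquad t\geq 0,
\end{equation*}
and verify three points: nonnegativity ($C\mu^\zeta_t-\eta_t\geq 0$ by assumption), unit total mass, and that $\nu$ lies in $M^\zeta_{\mu^\zeta}$. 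The last is where care is needed: I must check that $\nu$ is weakly continuous (a linear combination of weakly continuous curves), has initial value $\zeta$ (immediate from $\mu^\zeta_0=\eta_0=\zeta$), satisfies the local integrability of $a_{ij}(\mu^\zeta_t,x)$ and $b_i(\mu^\zeta_t,x)$ against $d\nu_t\,dt$ (which follows since $\nu_t\leq \tfrac{C}{C-1}\mu^\zeta_t$, so integrability transfers from $\mu^\zeta$), and solves the frozen linear FPE (by linearity of the equation in the solution variable when $\nu_t$ is held fixed as $\mu^\zeta_t$). Once $\nu\in M^\zeta_{\mu^\zeta}$, the identity
\begin{equation*}
\mu^\zeta_t = \tfrac{1}{C}\,\eta_t + \tfrac{C-1}{C}\,\nu_t
\end{equation*}
exhibits $\mu^\zeta$ as a convex combination in $M^\zeta_{\mu^\zeta}$, and extremality forces $\eta=\nu=\mu^\zeta$.

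The only non-cosmetic obstacle is the verification that $\nu$ genuinely belongs to $M^\zeta_{\mu^\zeta}$ — specifically that the signed combination remains a (weakly continuous) probability curve and that the integrability condition defining $M^\zeta_{\mu^\zeta}$ is preserved. Both follow from the domination $\eta_t\leq C\mu^\zeta_t$ and the linearity of the frozen equation, so no additional hypotheses are required; the rest of the argument is algebraic.
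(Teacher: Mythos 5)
Your argument is correct: the backward direction via the trivial domination $\mu^{(i)}_t\le 2\mu^\zeta_t$, and the forward direction via the complementary curve $\nu_t=\tfrac{C}{C-1}\mu^\zeta_t-\tfrac{1}{C-1}\eta_t$ together with the convex decomposition $\mu^\zeta_t=\tfrac{1}{C}\eta_t+\tfrac{C-1}{C}\nu_t$, is exactly the standard extremality-versus-domination argument, and you correctly account for the non-trivial membership checks (nonnegativity, unit mass, weak continuity, transfer of the integrability condition, and linearity of the frozen equation). The paper itself gives no proof --- it imports the lemma from \cite{R./Rckner_NL-Markov22} --- and your proof is essentially the one given in that reference.
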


	\subsection{Nonlinear Markov process associated with the \texorpdfstring{$2D$}{2D} vorticity Euler equation}
	We continue to use the notation $\Omega$ introduced at the beginning of the previous subsection, now for fixed dimension $d=2$.
	To apply the previous subsection to the $2D$ vorticity Euler equations, we consider \eqref{eq1} as the first-order nonlinear Fokker--Planck equation \eqref{eq2}. We have to identify subsets $\Pscr_0 \subseteq \Pscr$ such that there exists a family $\{\omega^\zeta\}_{\zeta \in \Pscr_0} \subseteq \Pscr_0$ of weakly continuous probability measure-valued solutions $\omega^\zeta = (\omega^\zeta_t)_{t\geq 0}$ to \eqref{eq1} with the flow property \eqref{eq:flow-prop}
	such that each $\omega^\zeta$ is the restricted unique (in the sense of Lemma \ref{lem}) solution to the linear equation
	\begin{equation}\label{aux567}
		\partial_t u + \divv(K(\omega^\zeta) u) = 0,\quad u(0)=\zeta.
	\end{equation}
	The associated differential equation is \eqref{eq_intro}, and a martingale solution to the latter is any Borel probability measure $Q$ on $\Omega$ concentrated on absolutely solution curves to \eqref{eq_intro} with $Q\circ \pi_t^{-1}$ instead of $\omega(t)$, see Remark \ref{rem:mg-sol-det-case}. The main result of this section is the following theorem.
	
	\begin{theorem}\label{thm:main-appl-selection}
		Let $p\geq 2$ and $\mathfrak{P} := \mathcal{P}\cap (L^1\cap L^p)(\R^2)$. There is a family of weakly continuous probability solutions $\{\omega^\zeta\}_{\zeta \in \mathfrak{P}}$, $\omega^\zeta_0 = \zeta$, to \eqref{eq1} in $L^\infty([0,\infty);(L^1\cap L^p)(\R^2))$ with the flow property, and a nonlinear Markov process $\{\mathbb{P}_\zeta\}_{\zeta \in \mathfrak{P}}$, consisting of probability measures $\mathbb{P}_\zeta$ on $\Omega$, such that $\mathbb{P}_\zeta$ is concentrated on absolutely continuous integral solutions to \eqref{eq_intro} with $\omega^\zeta$  instead of $\omega$, and
		\begin{equation*}
			\mathbb{P}_\zeta \circ \pi_t^{-1} = \omega^\zeta(t),\quad \forall t \geq 0.
		\end{equation*}
		Moreover, this nonlinear Markov process is uniquely determined by $\{\omega^\zeta\}_{\zeta \in \mathfrak{P}}$.
	\end{theorem}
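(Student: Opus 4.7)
The strategy is to verify the three hypotheses of Theorem \ref{theorem1} applied to the first-order nonlinear Fokker--Planck formulation \eqref{eq2} with state space $\mathfrak{P}$, after which existence and uniqueness of the nonlinear Markov process are automatic, and Remark \ref{rem:mg-sol-det-case} identifies the path laws as concentrated on absolutely continuous integral solutions to \eqref{eq_intro}. Concretely, we need to exhibit: (a) a family $\{\omega^\zeta\}_{\zeta\in\mathfrak{P}}$ of weakly continuous probability solutions to \eqref{eq1} with $\omega^\zeta_0 = \zeta$ and $\omega^\zeta \in L^\infty_{\text{loc}}([0,\infty),(L^1\cap L^p)(\R^2))$; (b) the flow property $\omega^{\omega^\zeta_s}_t = \omega^\zeta_{t+s}$ for all $s,t \geq 0$, $\zeta\in\mathfrak{P}$; and (c) the extremality condition $\omega^\zeta \in M^\zeta_{\omega^\zeta,\text{ex}}$.

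For (a) and (b), at least one weak solution in $L^\infty_{\text{loc}}([0,\infty),(L^1\cap L^p)(\R^2))$ is produced by classical smooth mollification and compactness, and has a weakly continuous representative by Lemma \ref{l:euler-vort-weak2}(iii). Since uniqueness is not known for $p<\infty$, the semigroup consistency in (b) cannot be deduced from well-posedness; instead one invokes the selection principle of Proposition \ref{prop:Romitos-prop}, which picks one weak solution per initial datum in $\mathfrak{P}$ in a way that is stable under concatenation. Conservation of mass and nonnegativity, together with the $L^p$-bound carried by the selection, ensure $\omega^\zeta_t \in \mathfrak{P}$ for every $t\geq 0$, so that the flow identity is meaningful and takes values in $\mathfrak{P}$.

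For (c), Lemma \ref{lem} reduces extremality to showing that $\omega^\zeta$ is the only curve in $M^\zeta_{\omega^\zeta} \cap \Ascr_{\leq}(\omega^\zeta)$, i.e.\ the unique probability solution $u$ of the linear continuity equation \eqref{aux567} with frozen divergence-free drift $b := K(\omega^\zeta)$ subject to the domination $u_t \leq C\omega^\zeta_t$. By Lemma \ref{lem:K-prop}(i)-(ii), since $\omega^\zeta \in L^\infty_{\text{loc}}([0,\infty),L^1\cap L^p)$ with $p\geq 2$, one has $b \in L^1_{\text{loc}}([0,\infty),W^{1,1}_{\text{loc}}(\R^2))$, and the splitting $K = K_1 + K_2$ together with Young's inequality gives $b \in L^1_{\text{loc}}([0,\infty),(L^1+L^\infty)(\R^2))$, so both assumptions in \eqref{0} hold. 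Any competitor $u$ dominated by $C\omega^\zeta$ inherits $u \in L^\infty_{\text{loc}}([0,\infty),L^p(\R^2))$; the discussion preceding Proposition \ref{prop:p>2} shows that in this class weak solutions of \eqref{1} with drift $b$ are renormalized (this is exactly where $p\geq 2$ is used), and both $\omega^\zeta$ and $u$ satisfy this. Proposition \ref{prop:DiPerna-L} then forces $u = \omega^\zeta$.

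Assembling (a), (b), (c) and applying Theorem \ref{theorem1} yields a uniquely determined family $\{\mathbb{P}_\zeta\}_{\zeta\in\mathfrak{P}}$ of path laws, each solving the nonlinear martingale problem for $b_i(\omega^\zeta_t,x) = K_i(\omega^\zeta_t)(x)$ with marginals $\omega^\zeta$, and forming a nonlinear Markov process; Remark \ref{rem:mg-sol-det-case} identifies such $\mathbb{P}_\zeta$ as concentrated on absolutely continuous integral solutions of \eqref{eq_intro} with $\omega^\zeta$ in place of $\omega$, which is the claim. The main obstacle in this plan is step (b): absent uniqueness of weak solutions, producing a consistent selection that satisfies the semigroup identity is nontrivial, and the whole construction is anchored on Proposition \ref{prop:Romitos-prop}; once that selection is in place, the remaining verifications reduce to invoking the standard DiPerna--Lions theory for the linear transport equation with drift $K(\omega^\zeta)$.
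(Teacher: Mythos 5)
Your proposal is correct and follows essentially the same route as the paper: apply Theorem \ref{theorem1} to the selection from Proposition \ref{prop:Romitos-prop}, and verify the extremality condition via Lemma \ref{lem} by noting that any competitor dominated by $C\omega^\zeta$ lies in $L^\infty([0,\infty);L^p(\R^2))$ and that $p\geq 2$ gives $K(\omega^\zeta)\in L^\infty([0,\infty);W^{1,p'}_{\textup{loc}})$ with $p'\leq p$, so DiPerna--Lions uniqueness for the linear equation \eqref{aux567} applies. The only cosmetic difference is that the paper invokes the uniqueness theorem of \cite{DiPL89} directly, whereas you pass through the renormalization property and Proposition \ref{prop:DiPerna-L}; these are the same underlying argument.
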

	For the proof, we need the following preparations regarding the selection of a solution flow to \eqref{eq1}.
	
	\paragraph{Flow selections.}
	The following proposition is the key for the proof of Theorem \ref{thm:main-appl-selection} and might be of independent interest. It follows from the adaption to functions given in \cite{CarKap2020} of the deep idea of Krylov \cite{Kry1973}, see also \cite{StrVar1979}, developed for Markov families. Selection theorems for infinite dimensional systems have been later developed, see \cite{FlaRom2008,Rom2008,GolRocZha2009}, and more recently \cite{BreFeiHof2020a,BreFeiHof2020b,BreFeiHof2020c,CarHofNilRan2022,Rehmeier-nonlinear-flow-JDE}.
	
	\begin{prop}\label{prop:Romitos-prop}
		Let $p\geq2$. There exists a map
		\[
		\omega: [0,\infty)\times (L^1\cap L^p)(\R^2)
		\longrightarrow (L^1\cap L^p)(\R^2)
		\]
		such that for every $\zeta\in  (L^1\cap L^p)(\R^2)$, $(\omega^\zeta_t)_{t\geq 0} := (\omega(t,\zeta)(x)dx)_{t\geq 0}$ is a weakly continuous solution to \eqref{eq1} in $L^\infty([0,\infty);(L^1\cap L^p)(\R^2))$ with initial condition $\zeta$. Moreover, $\{\omega^\zeta\}_{\zeta \in (L^1\cap L^p)(\R^2)}$ has the flow property, and the sub-family $\{\omega^\zeta\}_{\zeta \in \mathfrak{P}}$ has the flow property in $\mathfrak{P}$.
	\end{prop}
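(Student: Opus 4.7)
The plan is to apply the flow-selection technique of Krylov \cite{Kry1973} in the function-valued form of Cardona--Kapitanski \cite{CarKap2020}. For each $\zeta \in (L^1\cap L^p)(\R^2)$, let $S(\zeta)\subseteq C_\textup{w}([0,\infty);L^p(\R^2))$ denote the set of weakly continuous solutions to \eqref{eq1} starting from $\zeta$ and belonging to $L^\infty([0,\infty); (L^1\cap L^p)(\R^2))$. Nonemptiness of $S(\zeta)$ is the classical existence result for $p\geq \tfrac43$, obtained via vanishing-viscosity approximation and compensated compactness.

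First I would establish three structural properties of the multi-valued map $\zeta\mapsto S(\zeta)$: (i) compactness in $C([0,T];L^p(\R^2)_\textup{w})$ for every $T>0$; (ii) closedness under restriction, i.e.\ $\omega \in S(\zeta)$ implies $\omega_{s+\cdot}\in S(\omega_s)$ for every $s \geq 0$; and (iii) closedness under concatenation, i.e.\ pasting $\omega^1\in S(\zeta)$ with $\omega^2\in S(\omega^1_s)$ at time $s$ yields an element of $S(\zeta)$. Compactness uses the uniform $L^1\cap L^p$ bound, the mapping properties of $K*\cdot$ from Lemma \ref{lem:K-prop}, and equicontinuity in $t$ from the weak formulation; passage to the limit in the nonlinear term $(K*\omega)\omega$ is standard for $p\geq 2$. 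Restriction and concatenation are immediate from Definition \ref{def:weak-sol} together with the weak continuity provided by Lemma \ref{l:euler-vort-weak2}(iii).

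Next, fix a countable family of bounded continuous functionals $\{\Phi_{N,s}\}_{N \in \N,\, s \geq 0}$ on $C_\textup{w}([0,\infty);L^p(\R^2))$ of the form $\omega \mapsto \int_s^\infty e^{-(t-s)}\langle\omega_t,f_N\rangle\,dt$, with $(f_N)$ dense in $C_c(\R^2)$. Starting with $S_0(\zeta):= S(\zeta)$ and iterating
\[
S_{N+1}(\zeta) := \bigl\{\omega \in S_N(\zeta)\,:\, \Phi_{N,0}(\omega) = \sup_{\omega'\in S_N(\zeta)}\Phi_{N,0}(\omega')\bigr\},
\]
compactness ensures that each $S_N(\zeta)$ is nonempty and compact, and density of $(f_N)$ forces $\bigcap_N S_N(\zeta) = \{\omega^\zeta\}$. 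The flow property $\omega^\zeta_{s+\cdot} = \omega^{\omega^\zeta_s}$ then follows by combining restriction and concatenation with the extremal characterization of $\omega^\zeta$; the fact that we use the time-shifted functionals $\Phi_{N,s}$ rather than $\Phi_{N,0}$ alone is precisely what makes the maximization time-translation consistent, in the spirit of \cite[Thm. 2.8]{CarKap2020}. Invariance of $\mathfrak{P}$ is obtained by running the entire selection inside the subset of nonnegative unit-mass solutions, which is still nonempty, compact, and closed under restriction/concatenation; positivity is preserved by the transport structure, and mass conservation is Lemma \ref{l:euler-vort-weak2}(ii) using that \eqref{L1-int-sol} holds for $p\geq 2$ by Lemma \ref{lem:K-prop} and Remark \ref{rem:int-Sobolevembedd}.

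The main obstacle is the concatenation property: pasting two weak solutions at an intermediate time requires the pasted curve to satisfy the distributional formulation across the junction, which is where weak $L^p$-continuity at $t=s$ and agreement of the two one-sided traces must be combined. Once this is in place, the Krylov/Cardona--Kapitanski machinery applies in a rather mechanical way, and the flow property in $\mathfrak{P}$ reduces to checking mass and sign conservation along the selected solutions.
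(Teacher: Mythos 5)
Your overall strategy is the same as the paper's: a Krylov-type selection in the Cardona--Kapitanski function-valued form, verified through nonemptiness, compactness, restriction and concatenation of the solution sets $S(\zeta)$, with the $\mathfrak{P}$-part handled by mass and sign conservation. (The paper cites \cite[Theorem 4]{CarKap2020} as a black box and therefore also verifies measurability of $\zeta\mapsto\Cscr(\zeta)$, via the stability statement of Lemma \ref{l:compactmeasurable} and \cite[Lemma 12.1.8]{StrVar1979}; since you re-derive the iterative maximization by hand, omitting this is a presentational rather than substantive difference.)

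There is, however, one ingredient you assume rather than prove, and it is the load-bearing one. Compactness of $S(\zeta)$ requires a bound on $\sup_{t}\|\omega(t)\|_{L^1\cap L^p}$ that is \emph{uniform over all elements of} $S(\zeta)$ (and, for stability, over solutions with nearby initial data); membership in $L^\infty([0,\infty);(L^1\cap L^p)(\R^2))$ alone gives each solution its own constant, which could blow up along a sequence, so "the uniform $L^1\cap L^p$ bound" is not free. Likewise, "positivity is preserved by the transport structure" is not automatic for distributional solutions. Both facts are supplied in the paper by Lemma \ref{p:renormalized}: for $p\geq2$ every weak solution in this class is a renormalized solution in the sense of DiPerna--Lions (\cite[Proposition 1]{L06_ES>1}), whence the distribution function of $\omega(t)$, and therefore every $L^q$ norm for $q\in[1,p]$ as well as the sign, are conserved in time. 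This is precisely where the restriction $p\geq 2$ enters; without it your compactness claim, and hence the whole selection, does not close. A secondary point: for the final assertion you should not re-run the selection inside $\mathfrak{P}$ --- that would produce a possibly different family rather than the stated \emph{sub-family} of the global selection. The correct (and simpler) route is the paper's: observe that the globally selected flow leaves $\mathfrak{P}$ invariant by the conservation properties just mentioned, so the global flow property restricts to $\mathfrak{P}$.
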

	
	\begin{rem}
		Proposition \ref{prop:Romitos-prop} proves a slightly more general result than needed for Theorem \ref{thm:main-appl-selection}. We notice that a flow selection of probability solutions (i.e. the final part of the proposition) can directly be obtained from \cite[Proposition 4.8]{Rehmeier-nonlinear-flow-JDE}.
	\end{rem}
	
	\begin{rem}
		In the case $p<2$ (and, in light of our results and \cite{BruColDel2021}, $p>3/2$) unfortunately Proposition \ref{prop:p>2} does not hold, so we cannot guarantee uniform bounds on the vorticity in $L^1\cap L^p$ \emph{for all times}.
		
		If one is content with conservation laws (and thus the bounds on which Lemma \ref{l:compactmeasurable}, and in turn Proposition \ref{prop:Romitos-prop}, are based) that hold for almost all times (an idea initially introduced in \cite{FlaRom2008}), it is possible to obtain a selection for which the flow property is true only for almost all times. However, it is not clear how to turn this result into the existence of a nonlinear Markov process.
		
		Otherwise, one can rely on the approach developed in \cite{BreFeiHof2020a,BreFeiHof2020b}, which is based on including some conservation laws into the definition of solution. This way, possible time-discontinuities of the conservation laws are hard-coded into the solution and a selection with the flow property valid for all times can be obtained. Unfortunately, this would eventually require a Fokker-Planck equation \emph{also} for the conservation laws. At this stage, this appears unfeasible.
		
		The same remarks apply also to variants of Euler, such as the viscous/inviscid generalized SQG (see for instance \cite{Res1995,HelPieGarSwa1995,CorFefRod2004,ChaConCorGanWu2012} and references therein, and \cite{GelRom2021} for a statistical approach). We do not pursue this line of research here and plan to develop it in a future publication.
	\end{rem}
	
	For the proof of Proposition \ref{prop:Romitos-prop}, we need some auxiliary results.
	\begin{lem}\label{p:renormalized}
		Let $p\geq2$, and $\omega\in L^\infty(0,\infty;(L^1\cap L^p)(\R^2)$ be a weak solution of \eqref{eq2} with initial condition $\zeta$. Then the weakly continuous version of $\omega$ (cf. Lemma \ref{l:euler-vort-weak2}, here again denoted $\omega$) satisfies
		\begin{itemize}
			\item $\|\omega(t)\|_{L^q}=\|\zeta\|_{L^q}$, for every $t\geq0$ and every $q\in[1,p]$,
			\item $\int_{\R^2}\omega(t,x)\,dx = \int_{\R^2}\zeta(x)\,dx$, for every $t\geq0$,
			\item if $\zeta(x)\geq0$, {a.\,e.}, then $\omega(t,x)\geq0$, {a.\,e.} for every $t\geq0$.
		\end{itemize}
	\end{lem}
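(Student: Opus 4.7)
The plan is to leverage the Lagrangian representation of the solution provided by Proposition \ref{prop:p>2}. Since $p \geq 2$ and $\omega \in L^\infty([0,\infty); (L^1\cap L^p)(\R^2)) \subseteq L^\infty_{\loc}([0,\infty); L^p(\R^2))$, the weakly continuous version (still denoted $\omega$, as in the statement) admits the representation
\[
\omega(t,x) = \zeta(X^{-1}(t,x)) \quad \text{for every } t \geq 0 \text{ and } dx\text{-a.e. } x \in \R^2,
\]
where $X: \R \times \R^2 \to \R^2$ is a measure-preserving regular Lagrangian flow of \eqref{eq_intro}. By Definition \ref{def:Lagrang-sol}, $X(t,\cdot)$ is measure-preserving for every $t \in \R$, and because it is a bijective flow its inverse $X^{-1}(t,\cdot)$ is also measure-preserving. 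All three bullets will follow from the change-of-variables formula, applied pointwise in $t$.

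For the first bullet, since $\zeta \in L^1 \cap L^p$, interpolation gives $\zeta \in L^q$ for every $q \in [1,p]$, and the measure preservation of $X^{-1}(t,\cdot)$ yields
\[
\|\omega(t)\|_{L^q}^q = \int_{\R^2} |\zeta(X^{-1}(t,x))|^q\,dx = \int_{\R^2} |\zeta(y)|^q\,dy = \|\zeta\|_{L^q}^q.
\]
For the second bullet, the same change of variables applied to $\zeta$ itself (justified by $\zeta \in L^1$) gives the identity for the signed integrals. For the third bullet, if $\zeta \geq 0$ $dx$-a.e., the set $N := \{\zeta < 0\}$ is Lebesgue-null, hence so is $X(t,N)$ by measure preservation; since $\omega(t,x) = \zeta(X^{-1}(t,x)) < 0$ can only occur when $x \in X(t,N)$, we conclude $\omega(t,\cdot) \geq 0$ $dx$-a.e.

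Given the strength of Proposition \ref{prop:p>2}, no serious obstacle remains; the lemma is essentially a corollary of the Lagrangianity of weak $L^p$ solutions once $p \geq 2$. The only minor points deserving attention are (i) that the three identities hold for \emph{every} $t \geq 0$ rather than merely $dt$-a.e., which is guaranteed because Proposition \ref{prop:p>2} supplies the Lagrangian representation and measure preservation for every $t$; and (ii) the extension to $t=0$, which is automatic from the weak continuity of $\omega$ at $0$ together with the initial condition $\omega(0)=\zeta$.
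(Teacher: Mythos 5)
Your proof is correct, but it takes a different (though closely related) route from the paper. The paper does not invoke the Lagrangian representation of Proposition \ref{prop:p>2}; instead it cites the companion result of the same reference, namely that $\omega$ is a \emph{renormalized} solution in the sense of DiPerna--Lions, deduces that the Lebesgue measures of the level sets $\{\omega(t)>\theta\}$ and $\{\omega(t)<-\theta\}$ are independent of $t$, and reads off all three bullets from the conservation of the distribution function (layer-cake for the $L^q$ norms and the signed integral, the $\theta\downarrow 0$ level sets for nonnegativity). Your argument instead composes $\zeta$ with the measure-preserving flow and uses the change-of-variables formula. Both proofs rest on exactly the same deep input ($b=K(\omega)$ divergence-free and in $W^{1,p'}_{\loc}$ for $p\geq 2$, so the DiPerna--Lions theory applies to the linearized transport equation); the renormalization route is marginally leaner because it never needs the a.e.\ invertibility of the regular Lagrangian flow, which you assert but which is only implicit in Definition \ref{def:Lagrang-sol}. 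One point where you should be slightly more careful: to get the identities for \emph{every} $t$ for the weakly continuous version, it is not enough that they hold $dt$-a.e., since the $L^q$ norm is only weakly lower semicontinuous (you would only inherit ``$\leq$'' at exceptional times). The clean fix is to note that $t\mapsto\zeta(X^{-1}(t,\cdot))$ is itself weakly continuous in $L^p$ (by the change of variables $\int\zeta(X^{-1}(t,x))\phi(x)\,dx=\int\zeta(y)\phi(X(t,y))\,dy$, dominated convergence, and the uniform bounds), hence coincides with the unique weakly continuous version at all times; your remark (i) gestures at this but does not spell it out.
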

	\begin{proof}
		By \cite[Proposition 1]{L06_ES>1}, $\omega$ is a renormalized solution in the sense of \cite{DiPL89}. In particular, for every $\theta\geq0$, the Lebesgue measure of the sets
		\[
		\{x\in\R^2:\omega(t,x)>\theta\},\qquad
		\{x\in\R^2:\omega(t,x)<-\theta\},
		\]
		and thus of $\{x\in\R^2:|\omega(t,x)|>\theta\}$, is independent from $t$. All assertions follow from here.
	\end{proof}
	
	Let $\Sscr=C_\textup{w}([0,\infty);L^p(\R^2))$. For $\zeta\in (L^1\cap L^p)(\R^2)$, denote by $\Cscr(\zeta)$ the subset of $\Sscr$ of all weakly continuous weak solutions of \eqref{eq2} with initial condition $\zeta$.
	The proof of the next result is postponed to the appendix.
	
	\begin{lem}\label{l:compactmeasurable}
		Let $p\geq2$.
		For every sequence $(\zeta^n)_{n\geq1}$ converging to $\zeta$ in $(L^1\cap L^p)(\R^2)$, and for every sequence $\omega_n\in\Cscr(\zeta^n)$, $(\omega^n)_{n\geq1}$ has a limit point in $C_\textup{w}([0,\infty);L^p(\R^2))$, which belongs to $\Cscr(\zeta)$.
	\end{lem}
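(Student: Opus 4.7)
The plan is to run the standard compactness scheme for sequences of weak solutions of 2D Euler: derive uniform estimates, extract a weakly-in-$L^p$, uniformly-in-$t$ convergent subsequence via Arzelà--Ascoli, identify the initial datum, and pass to the limit in the nonlinear weak formulation. Since each $\omega^n\in\Cscr(\zeta^n)$ is a weakly continuous weak solution with values in $L^p$, Lemma~\ref{p:renormalized} yields $\|\omega^n(t)\|_{L^q}=\|\zeta^n\|_{L^q}$ for every $t\geq 0$ and $q\in[1,p]$. Since $\zeta^n\to\zeta$ in $L^1\cap L^p$, the family $(\omega^n)$ is uniformly bounded in $L^\infty([0,\infty);(L^1\cap L^p)(\R^2))$, and Lemma~\ref{lem:K-prop}(iii) then gives a uniform bound on $(K*\omega^n)\omega^n$ in $L^\infty([0,\infty);L^1(\R^2,\R^2))$.

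Next I would establish weak-$L^p$ equicontinuity in time. By Lemma~\ref{l:euler-vort-weak2}(i), for $\phi\in C_c^\infty(\R^2)$ and $0\leq s\leq t$,
\begin{equation*}
\bigl|\langle\omega^n(t)-\omega^n(s),\phi\rangle\bigr|\leq \|\nabla\phi\|_\infty\,\sup_n\|(K*\omega^n)\omega^n\|_{L^\infty(L^1)}\,|t-s|,
\end{equation*}
so $t\mapsto\langle\omega^n(t),\phi\rangle$ is Lipschitz uniformly in $n$. Choosing a countable dense $\{\phi_k\}\subset C_c^\infty(\R^2)\subset L^{p'}(\R^2)$, a double diagonal extraction (over $k$ and over $T\in\N$), together with the uniform $L^p$-bound and density in $L^{p'}$, produces a subsequence (not relabeled) converging to some $\omega\in C_\textup{w}([0,\infty);L^p(\R^2))$. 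The initial condition $\omega(0)=\zeta$ follows from $\omega^n(0)=\zeta^n\to\zeta$ in $L^p$.

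The decisive step, and the main obstacle, is passing to the limit in
\begin{equation*}
\int_0^\infty\int_{\R^2}\bigl(\partial_t\varphi\,\omega^n+\nabla\varphi\cdot(K*\omega^n)\omega^n\bigr)\,dx\,dt + \int_{\R^2}\varphi(0,\cdot)\zeta^n\,dx = 0,
\end{equation*}
for every $\varphi\in C_c^\infty([0,\infty)\times\R^2)$. The linear terms pass by $C_\textup{w}$-convergence and by $\zeta^n\to\zeta$ in $L^1\cap L^p$. For the quadratic term I would exploit the compensated compactness built into $K*$: at each fixed $t$, $\omega^n(t)\rightharpoonup\omega(t)$ in $L^p$; by Lemma~\ref{lem:K-prop}(ii), $K*$ sends $L^1\cap L^p$ continuously into $W^{1,p}_{\loc}(\R^2,\R^2)$, so $K*\omega^n(t)\rightharpoonup K*\omega(t)$ in $W^{1,p}_{\loc}$ and, by Rellich--Kondrachov (valid for every $p\geq 2$ in dimension two, through $W^{1,2}\hookrightarrow L^r_{\loc}$ for all $r<\infty$ and through $W^{1,p}\hookrightarrow C^{0,\alpha}_{\loc}$ for $p>2$), strongly in $L^{p'}_{\loc}$. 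Pairing the strong-$L^{p'}$ convergence of $\nabla\varphi(t,\cdot)\cdot K*\omega^n(t)$ with the weak-$L^p$ convergence of $\omega^n(t)$ on the compact spatial support of $\nabla\varphi$ yields pointwise-in-$t$ convergence of the inner integral; dominated convergence in $t$, justified by the uniform $L^1$-bound from Step~1 and the compact time support of $\varphi$, then transfers the convergence to the full space-time integral. The entire mechanism rests on the $L^p$-to-$L^{p'}_{\loc}$ compactness of $K*$, which is precisely where the hypothesis $p\geq 2$ is essential; everything else in the argument is soft.
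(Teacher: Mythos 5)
Your proposal is correct and follows essentially the same route as the paper: conservation of $L^q$ norms via Lemma~\ref{p:renormalized}, Lipschitz-in-time estimates for $\langle\omega^n(t),\phi\rangle$ giving weak-$L^p$ equicontinuity, Arzel\`a--Ascoli with a diagonal extraction, local compactness of the Biot--Savart operator, and a weak--strong pairing in the nonlinear term. The one genuine difference is that you treat $p=2$ and $p>2$ uniformly (Rellich into $L^{p'}_{\loc}$, and the $L^\infty(L^1)$ bound on $(K*\omega^n)\omega^n$ for the equicontinuity estimate), whereas the paper splits into cases, using Morrey and uniform convergence of $v^n$ for $p>2$ and a separate $L^q\times L^{q'}$ pairing for $p=2$; your unified version is slightly cleaner and works. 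Two routine points are left implicit that the paper spells out: (a) that the limit $\omega$ lies in $L^\infty([0,\infty);L^1(\R^2))$ (needed for $\omega$ to be a finite-measure-valued weak solution and for membership in $\Cscr(\zeta)$; obtained by testing against $\uno_{B_R}$ and weak lower semicontinuity), and (b) the identification of the weak $W^{1,p}_{\loc}$ limit of $K*\omega^n(t)$ as $K*\omega(t)$, which requires the duality $\int (K*\omega^n)\phi = -\int \omega^n (K*\phi)$ together with weak convergence of $\omega^n(t)$ in $L^q$ for some $q<2$, since the far-field part of $K*\phi$ decays only like $|x|^{-1}$ and hence lies in $L^{q'}$ only for $q'>2$.
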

	
	Now we give the proof of Proposition \ref{prop:Romitos-prop}.
	
	\begin{proof}[Proof of Proposition \ref{prop:Romitos-prop}]
		Set for brevity $\Lscr_p=(L^1\cap L^p)(\R^2)$. In view of \cite[Theorem 4]{CarKap2020}, we need to prove that
		\begin{itemize}
			\item[s1.] for every $\zeta\in\Lscr_p$, $\Cscr(\zeta)$ is a non-empty compact subset of $\Sscr$,
			\item[s2.] $\zeta \mapsto\Cscr(\zeta)$ is measurable (with respect to the Hausdorff topology),
			\item[s3.] for every $\zeta\in\Lscr_p$ and $t\geq0$, if $\omega\in\Cscr(\zeta)$, then $\omega(t+\cdot)\in \Cscr(\omega(t))$.
			\item[s4.] for every $\zeta\in\Lscr_p$ and $t>0$, if $\omega\in\Cscr(\zeta)$ and $\omega'\in\Cscr(\omega(t))$, then $\omega\oplus_t\omega'\in\Cscr(\zeta)$, where $\omega\oplus_t\omega'(s):=\omega(s)\uno_{[0,t]}(s) + \omega'(s-t)\uno_{(t,\infty]}(s)$.
		\end{itemize}
		We first notice that properties (s3), (s4) are immediate when one considers the equivalent definition of weak solution given in Lemma \ref{l:euler-vort-weak2}. Compactness of $\Cscr(\zeta)$ follows from Lemma \ref{l:compactmeasurable} by taking $\zeta^n=\zeta$, while measurability follows again from Lemma \ref{l:compactmeasurable} and \cite[Lemma 12.1.8]{StrVar1979}. To complete the proof of (s1) it suffices to prove that each $\Cscr(\zeta)$ is non-empty. Existence of solutions in $L^\infty([0,\infty);(L^1\cap L^p)(\R^2))$ with initial condition in $\Lscr_p$ is classical, see for instance \cite{Che1995} and the references therein. By Lemma \ref{l:euler-vort-weak2} (iii) each such solution has a version in $C(\zeta)$.
		
		The final part of the assertion follows, since by Lemma \ref{p:renormalized} solutions in each $C(\zeta)$, $\zeta \in (L^1\cap L^p)(\R^2)$, have constant $L^1$-norm and are nonnegative, if $\zeta$ is nonnegative.
	\end{proof}
	
	\begin{proof}[Proof of Theorem \ref{thm:main-appl-selection}]
		We aim to apply Theorem \ref{theorem1}.
		By Proposition \ref{prop:Romitos-prop}, there is a family $\{\omega^\zeta\}_{\zeta \in \mathfrak{P}}$ of weakly continuous probability solutions to \eqref{eq1} with $\omega_0^\zeta = \zeta$ with the flow property in $\mathfrak{P}$. Since $\omega^\zeta \in L^\infty([0,\infty);(L^1\cap L^p)(\R^2))$, by Lemma \ref{lem:K-prop} (iii) we have $\omega^\zeta v^\zeta \in L^\infty([0,\infty);L^1(\R^2,\R^2))$, so $\omega^\zeta \in M^\zeta_{\omega^\zeta}$ (we set $v^\zeta := K *w^\zeta)$. It remains to prove one of the equivalent assumptions of Lemma \ref{lem} for each $\omega^\zeta$. To this end note that, using the notation of Lemma \ref{lem} adapted to the present situation, $M^\zeta_{\omega^\zeta}\cap \Ascr_{\leq}(\omega^\zeta)$ is a subset of all solutions to \eqref{aux567} in $L^\infty([0,\infty);L^p(\R^2))$ with initial datum $\zeta$. Since $p\geq 2$ implies $p'\leq p$, Lemma \ref{lem:K-prop} yields $v^\zeta \in L^\infty([0,\infty);W^{1,p'}_{\textup{loc}}(\R^2,\R^2))$, and together with Lemma \ref{lem:K-prop} (i) this allows to apply \cite[Thm.II.2]{DiPL89} to deduce uniqueness of solutions to \eqref{aux567} in $L^\infty([0,\infty),L^p(\R^2))$ with initial datum $\zeta$. Therefore, the proof is complete.
	\end{proof}

	\subsection{Nonlinear Markov processes in regularity persistent spaces}
	
	Here we report some results in the literature on spaces $E$ with the property that for any initial vorticity in $E$, there exists a unique weakly continuous weak solution for \eqref{eq1} with values in $E$ at all times (called \emph{persistence of regularity}). In general, persistence of regularity is not obvious. For instance the results of \cite{Vis1999} fail to identify such spaces $E$. Each of the spaces
	\begin{equation}\label{aux11}
		(L^1\cap L^\infty)(\R^2)\subset Y^\Theta\subset L^1(\R^2)\cap Y^\Theta_\uloc
	\end{equation}
	is an example of spaces $E$ allowing for regularity persistent solutions for every initial vorticity. Here, for a non-decreasing function $\Theta: [1,\infty) \to (0,\infty)$, $Y^\Theta$ is the well-known \emph{Yudovich space},
	$$Y^\Theta := \bigg\{ f \in \bigcap_{p\in [1,\infty)} L^p(\R^2): \sup_{p \geq 1} \frac{||f||_{L^p}}{\Theta(p)} < \infty\bigg\},$$
	while $Y^\Theta_\uloc$ denotes the \emph{uniformly localized Yudovich space}
	\[
	Y_\uloc^\Theta
	:= \Bigl\{f\in\bigcap_{p\in[1,\infty)}L^p_\uloc(\R^2): \sup_{p\geq 1}\frac{\|f\|_{L^p_\uloc}}{\Theta(p)}<\infty\Bigr\},
	\]
	where 
	$$L^p_\uloc(\R^2) := \bigg\{ f \in L^p_\loc(\R^2) :  ||f||_{L^p_\uloc} = \sup_{x\in \R^2} ||f||_{L^p(B_1(x))} < \infty \bigg\}.$$
	The 2D vorticity Euler equations in $(L^1\cap L^\infty)(\R^2)$ were studied in \cite{Yud1963,Che1995}. The spaces $Y^\Theta$ and $Y^\Theta_\uloc$ have been studied in \cite{Yud1995} and \cite{Tan2004,CS21}, respectively. We refer to these works for the well-posedness results stated below.
	%In this section we do not claim any originality and we shall review some results in the literature that provide such spaces, and prove additionally that solutions whose initial vorticity distribution is a density, stay densities. In particular,
	Regarding $Y^\Theta_\uloc$, we follow \cite{CS21} by considering $\varphi_\Theta:[0,\infty)\to[0,\infty)$ defined via 
	$$\varphi_\Theta(0)=0, \,\,\,\varphi_\Theta(r)=r(1-\log r)\Theta(1-\log r) \textup{ for }r\in[0,e^{-2}],\,\, \varphi_\Theta = \varphi_\Theta(e^{-2}) \text{ for }r\geq e^{-2}.$$
	Denote by $C_b^{\varphi_\Theta}(\R^2,\R^2)$ the continuous bounded vector fields with modulus of continuity $\varphi_\Theta$, i.e.
	$$C_b^{\varphi_\Theta}(\R^2,\R^2) := \bigg\{g \in C_b(\R^2,\R^2) : \sup_{x\neq y}\frac{|g(x)-g(y)|}{\varphi_\Theta(|x-y|)} < \infty\bigg\},$$
	with norm
	\begin{equation*}\label{eq:Cphi}
		\|g\|_{C_b^{\varphi_\Theta}}
		= \|g\|_{L^\infty} + \sup_{x\neq y}\frac{|g(x)-g(y)|}{\varphi_\Theta(|x-y|)}.
	\end{equation*}

	\begin{theorem}[\cite{CS21}]\label{t:yudovichclass}
		Assume that $\varphi_\Theta$ is concave on $[0,\infty)$ and
		\begin{equation}\label{eq:preosgood}
			\int_1^\infty \frac{1}{p\Theta(p)} dp
			= \infty.
		\end{equation}
		Let $\zeta\in L^1(\R^2)\cap Y_\uloc^\Theta$. Then the class of vaguely continuous weak solutions to \eqref{eq1} in $L^\infty_\loc([0,\infty);L^1(\R^2)\cap Y^\Theta_\uloc)$ with initial condition $\omega_0$ contains exactly one element $\omega^\zeta$. Moreover, if $\zeta \geq 0$ and $||\zeta||_{L^1} = c$, then $\omega^\zeta \geq 0$ and $||\omega^\zeta(t)||_{L^1} = c$  for all $t >0$. 
		In addition, $v^\zeta = K*\omega^\zeta$ belongs to  $L^\infty_\loc([0,\infty);C_b^{\varphi_\Theta}(\R^2,\R^2))$.
	\end{theorem}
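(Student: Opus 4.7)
The plan is to follow the Yudovich-type strategy adapted to the uniformly localized setting as developed in \cite{CS21}, from which the theorem is quoted. The argument rests on three components: a quantitative Biot--Savart regularity estimate, existence via smooth approximation with propagation of the $L^1\cap Y^\Theta_\uloc$-norm, and Osgood-type uniqueness.

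First I would establish the quantitative embedding: for $\omega\in L^1(\R^2)\cap Y^\Theta_\uloc$, the velocity $v=K*\omega$ lies in $C_b^{\varphi_\Theta}(\R^2,\R^2)$ with norm controlled by $\|\omega\|_{L^1}+\|\omega\|_{Y^\Theta_\uloc}$. This is proved by splitting the convolution $v(x)-v(y)$ into a near-field integral over $B_{2|x-y|}(x)\cup B_{2|x-y|}(y)$, a medium-field integral over a fixed unit ball, and a far-field integral over its complement, then applying H\"older's inequality with exponent $p\sim 1-\log|x-y|$ on the first two (where the definition of $Y^\Theta_\uloc$ produces a factor $\Theta(p)$) and using the decay of $K$ together with $L^1$-integrability on the last. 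Optimizing in $p$ yields exactly the modulus $\varphi_\Theta$.

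For existence, I would mollify $\zeta$ to obtain smooth compactly supported $\zeta_n$ with uniformly controlled $L^1\cap Y^\Theta_\uloc$-norm, solve the classical Euler equations to get smooth $\omega_n$, and extract a weak limit. Propagation of the $L^p_\uloc$-norm uses the Lagrangian representation $\omega_n(t,\cdot)=\zeta_n\circ X_n^{-1}(t,\cdot)$: the Biot--Savart estimate combined with $\omega_n\in L^1$ forces $v_n$ to have at most logarithmic spatial growth, so the flow $X_n$ distorts unit balls in a quantitative, $t$-dependent way, and global $L^p$-conservation under this measure-preserving flow yields a uniform-in-$n$ bound $\omega_n\in L^\infty_\loc([0,\infty);L^1\cap Y^\Theta_\uloc)$. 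Weak compactness combined with the velocity regularity then permits passage to the limit in the nonlinear term $v_n\omega_n$, producing a weak solution in the required class; $L^1$-conservation and positivity transfer automatically from the approximants.

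For uniqueness, note that \eqref{eq:preosgood} is equivalent via the substitution $p=1-\log r$ to the Osgood condition $\int_0^1\tfrac{dr}{\varphi_\Theta(r)}=\infty$. Given two solutions $\omega_1,\omega_2$ in the class, one closes a Yudovich-type argument: the $\varphi_\Theta$-modulus of $v_1,v_2$ is enough to differentiate a suitably localized $L^2$-norm of $\omega_1-\omega_2$ in time and obtain a logarithmic Gronwall inequality whose only nonnegative solution through zero is trivial. The main obstacle throughout is the propagation of the \emph{uniformly localized} norm: unlike the full-space $L^p$ case, the transport flow does not automatically preserve $L^p_\uloc$, and quantifying the distortion of unit balls uniformly in the approximation is precisely where the concavity of $\varphi_\Theta$ and the Osgood condition \eqref{eq:preosgood} interact.
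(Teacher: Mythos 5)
First, a point of order: the paper does not prove Theorem \ref{t:yudovichclass} at all. It is imported verbatim from \cite{CS21} (the authors explicitly write ``We refer to these works for the well-posedness results stated below''), so there is no internal proof to compare against; your sketch can only be judged against the known argument in the cited literature.

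On that basis, your first two components are essentially the right strategy and match \cite{CS21}: the near-/far-field splitting of $K*\omega$ with Hölder exponent $p\sim 1-\log|x-y|$ is exactly how the $C_b^{\varphi_\Theta}$-bound is obtained, and propagation of the $L^p_\uloc$-norms follows from the Lagrangian representation plus the fact that $v$ is \emph{bounded}, so the flow moves a unit ball into a ball of radius $1+t\|v\|_{L^\infty}$, coverable by $O((1+t\|v\|_{L^\infty})^2)$ unit balls --- this is what produces the merely locally-in-time bound $L^\infty_\loc([0,\infty);L^1\cap Y^\Theta_\uloc)$.

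The genuine gap is in your uniqueness step. You propose to differentiate a localized $L^2$-norm of $\omega_1-\omega_2$. The equation for $\omega_1-\omega_2$ contains the term $(v_1-v_2)\cdot\nabla\omega_2$, so closing an energy estimate at the level of the vorticity difference requires a derivative of $\omega_2$, which is not available in $L^1\cap Y^\Theta_\uloc$. The classical Yudovich/Osgood argument is run on the \emph{velocity} difference $w=v_1-v_2$ in (localized) $L^2$: there the problematic term is $\int (w\cdot\nabla)v_2\cdot w\,dx$, which is handled by $\|\nabla v_2\|_{L^p}\lesssim p\,\Theta(p)$ (Calder\'on--Zygmund) and interpolation $\int|w|^2|\nabla v_2|\lesssim p\,\Theta(p)\,\|w\|_{L^\infty}^{2/p}\|w\|_{L^2}^{2-2/p}$, followed by optimization in $p$; this is where \eqref{eq:preosgood} enters. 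Alternatively --- and this is closer to what \cite{CS21} actually does, consistent with its ``elementary'' billing --- one proves Osgood uniqueness for the ODE trajectories directly from the $\varphi_\Theta$-modulus of continuity of $v$, and then deduces Eulerian uniqueness from the fact that every weak solution in the class is transported by this unique flow (a renormalization/Lagrangianity step in the spirit of Proposition \ref{prop:DiPerna-L}, which itself needs justification and is not mentioned in your sketch). As written, your uniqueness argument does not close.
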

	
	\begin{rem}
		\begin{enumerate}
			\item [(i)] In the situation of Theorem \ref{t:yudovichclass}, it is known that if either $\zeta \in (L^1\cap L^\infty)(\R^2)$ or $\zeta \in Y^\Theta$, then $\omega^\zeta \in L^\infty_\loc([0,\infty);(L^1\cap L^\infty)(\R^2))$ or $\omega^\zeta \in L^\infty_\loc([0,\infty);L^1(\R^2)\cap Y^\Theta)$, respectively. Hence each of the spaces from \eqref{aux11} allows for unique regularity-persistent solutions.
			\item [(ii)]  By \eqref{eq:preosgood} the function $\varphi_\Theta$ satisfies the \emph{Osgood condition},
			\[
			\int_0^1\frac1{\varphi_\Theta(x)}\,dx
			=\infty.
			\]
			Thus, by the last part of Theorem \ref{t:yudovichclass}, the ODE associated with the vector field $K*\omega^\zeta = v^\zeta$ has unique solutions for every initial value. This implies in particular that $\omega^\zeta$ is the only weakly continuous weak solution $w$ to the linear equation \eqref{aux567} in $L^\infty_\loc([0,\infty);L^1(\R^2))$ such that $w v^\zeta \in L^1([0,T]\times \R^2,\R^2; dx dt)$ for all $T>0$. Thus, in this situation the equivalent conditions of Lemma \ref{lem} are satisfied.
		\end{enumerate}
	\end{rem}
	Altogether, by Theorem \ref{theorem1}, Lemma \ref{lem}, Theorem \ref{t:yudovichclass} and the previous remark, we obtain the following result.
	\begin{prop}\label{prop:well-posed-case}
		Let $E$ be either of the spaces from \eqref{aux11} and assume $\varphi_\Theta$ is concave and satisfies \eqref{eq:preosgood}. Then there exists a uniquely determined nonlinear Markov process $\{\mathbb{P}_\zeta\}_{\zeta \in \Pscr \cap E}$ with one-dimensional time marginals given by the family of unique solutions $\omega^\zeta$ to \eqref{eq1} in $L^\infty_\loc([0,\infty),E)$ with initial datum $\zeta$. Each $\mathbb{P}_\zeta$ is a probability measure on $\Omega$ concentrated on absolutely continuous integral solutions to \eqref{eq_intro} with $\omega^\zeta$ instead of $\omega$.
	\end{prop}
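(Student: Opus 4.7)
The plan is to apply Theorem \ref{theorem1}, with $\Pscr_0 = \Pscr \cap E$, exactly as flagged in the paragraph preceding the proposition. First, for each $\zeta \in \Pscr \cap E$, Theorem \ref{t:yudovichclass} (together with the first point of the subsequent remark, which promotes the conclusion from $L^1\cap Y^\Theta_{\uloc}$ to the finer spaces $(L^1\cap L^\infty)(\R^2)$ and $Y^\Theta$) produces a unique vaguely continuous weak solution $\omega^\zeta \in L^\infty_\loc([0,\infty);E)$ with $\omega^\zeta(0)=\zeta$. The $L^1$-conservation and nonnegativity statement in Theorem \ref{t:yudovichclass} gives $\omega^\zeta(t)\in \Pscr$ for all $t\geq 0$, so $\omega^\zeta_t \in \Pscr \cap E$. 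Lemma \ref{l:euler-vort-weak2}(iii) upgrades vague to weak continuity, so $\omega^\zeta$ lies in $C_\textup{w}([0,\infty);L^p(\R^2))$ for every $p\in[1,\infty)$. The flow property \eqref{eq:flow-prop} inside $\Pscr \cap E$ is then a one-line consequence of uniqueness: for each fixed $s\geq 0$, both curves $t\mapsto \omega^\zeta_{s+t}$ and $t\mapsto \omega^{\omega^\zeta_s}_t$ are admissible solutions to \eqref{eq1} starting from $\omega^\zeta_s \in \Pscr \cap E$, hence they coincide.

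Next I would verify the two remaining hypotheses of Theorem \ref{theorem1}: the integrability condition defining membership in $M^\zeta_{\omega^\zeta}$, and the extremality $\omega^\zeta \in M^\zeta_{\omega^\zeta,\textup{ex}}$. For the first, write $v^\zeta := K\ast \omega^\zeta$; by the last assertion of Theorem \ref{t:yudovichclass}, $v^\zeta \in L^\infty_\loc([0,\infty);C_b^{\varphi_\Theta}(\R^2,\R^2))$, and in particular $v^\zeta$ is locally bounded in space-time while $\omega^\zeta_t$ is a finite measure, so $v^\zeta \in L^1([0,T]\times \R^2; d\omega^\zeta_t\, dt)$ for all $T>0$, giving $\omega^\zeta \in M^\zeta_{\omega^\zeta}$. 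For extremality, I invoke Lemma \ref{lem}: I must show $\omega^\zeta$ is the unique element of $M^\zeta_{\omega^\zeta}\cap \Ascr_{\leq}(\omega^\zeta)$. Any such competitor $\eta$ is a weakly continuous probability solution of the \emph{linear} continuity equation \eqref{aux567} driven by the frozen field $v^\zeta$, and the domination $\eta_t \leq C \omega^\zeta_t$ together with $\omega^\zeta_t \in L^1\cap Y^\Theta_{\uloc}$ places $\eta$ in $L^\infty_\loc([0,\infty);L^1(\R^2))$ with $\eta\, v^\zeta \in L^1_\loc$. Point (ii) of the remark after Theorem \ref{t:yudovichclass} then applies: the Osgood regularity of $v^\zeta$ coming from \eqref{eq:preosgood} forces uniqueness of weakly continuous $L^1_\loc$-solutions to the linear transport equation with velocity $v^\zeta$, hence $\eta = \omega^\zeta$.

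With both assumptions verified, Theorem \ref{theorem1} yields a uniquely determined nonlinear Markov process $\{\mathbb{P}_\zeta\}_{\zeta\in \Pscr\cap E}$ whose one-dimensional marginals are the $\omega^\zeta_t$, each $\mathbb{P}_\zeta$ being the unique martingale solution (with those marginals) of the nonlinear martingale problem associated with the drift $b(t,\mu,x) = K(\mu)(x)$ and vanishing diffusion. Because $a_{ij}\equiv 0$, Remark \ref{rem:mg-sol-det-case} identifies such $\mathbb{P}_\zeta$ with Borel probability measures on $\Omega$ concentrated on absolutely continuous integral curves of \eqref{eq_intro} with $\omega^\zeta$ in place of $\omega$, which is the stated concentration property.

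The only step requiring any genuine thought is the extremality verification, and there the hard work has already been done upstream: the Osgood-type regularity of $v^\zeta$ supplied by Theorem \ref{t:yudovichclass} is precisely what rules out the existence of a second dominated solution to the linearized equation. Everything else is bookkeeping — conservation of mass and positivity to stay in $\Pscr \cap E$, uniqueness-based flow property, and matching the abstract hypotheses of Theorem \ref{theorem1} to the present first-order setting.
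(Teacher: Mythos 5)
Your argument is correct and is exactly the paper's route: the paper's proof is the single sentence ``by Theorem \ref{theorem1}, Lemma \ref{lem}, Theorem \ref{t:yudovichclass} and the previous remark,'' and you have simply filled in the same verifications (flow property from uniqueness, $M^\zeta_{\omega^\zeta}$-membership from boundedness of $v^\zeta$, extremality from the Osgood/uniqueness remark, concentration from Remark \ref{rem:mg-sol-det-case}). One small caveat: for $E=L^1(\R^2)\cap Y^\Theta_\uloc$ the inclusion into a global $(L^1\cap L^p)(\R^2)$ needed to invoke Lemma \ref{l:euler-vort-weak2}(iii) is not automatic, but the weak continuity you need follows anyway from vague continuity plus mass conservation as in Lemma \ref{l:euler-vort-weak2}(ii), since \eqref{L1-int-sol} holds by boundedness of $v^\zeta$.
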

	Since for $E = (L^1\cap L^\infty)(\R^2)$ and $E = L^1(\R^2) \cap Y^\Theta$ we have $E \subseteq (L^1\cap L^p)(\R^2)$, in these cases the nonlinear Markov process is a subfamily of the process obtained in Theorem \ref{thm:main-appl-selection}.
	
	\begin{rem}
		Results of persistence of regularity in BMO-like spaces can be found in \cite{BerKer2014} (see also \cite{Vis1999,BerHmi2015}) and \cite{CheMiaZhe2019}. One reason why BMO spaces are relevant in this setting is that by interpolation results between Lebesgue and BMO spaces \cite[Chapter 7]{Gra2009}, it follows that given $p\in(1,\infty)$, for every $q\in(p,\infty)$ there is $C(q)>0$ such that for $f\in L^p(\R^2)\cap BMO$,
		\[
		\|f\|_{L^q}
		\leq C(q)\|f\|_{L^p}^{p/q}\|f\|_{BMO}^{1-\frac{p}{q}}.
		\]
		In other words $\sup_q\|f\|_{L^q}/C(q)$ is finite, and thus $L^p(\R^2)\cap BMO$ is included in a suitable Yudovich space.
	\end{rem}

	\appendix
	\section{Appendix}
	We complete the work with the proof of Lemma \ref{l:compactmeasurable}.
	
	\begin{proof}[Proof of Lemma \ref{l:compactmeasurable}]
		First, let $p>2$. Let $(\zeta^n)_{n\geq1}$ and $(\omega^n)_{n\geq1}$ be sequences as in the statement of the proposition. By Lemma \ref{p:renormalized} we have
		\begin{equation}\label{eq:cm_boundomega}
			\sup_n\sup_{t\in[0,\infty)}\|\omega^n(t)\|_{L^q}
			\leq \sup_n\|\zeta^n\|_{L^q}
			\leq \sup_n(\|\zeta^n\|_{L^1} + \|\zeta^n\|_{L^p})
			\vcentcolon= C_0,
		\end{equation}
		for all $q\in[1,p]$. Moreover, if $v^n=K * \omega^n$, then by standard properties of the Biot-Savart kernel,
		\begin{equation}\label{eq:cm_boundv}
			\sup_n\sup_{t\in [0,\infty)}\|v^n(t)\|_{W^{1,p}}
			\lesssim C_0.
		\end{equation}
		We prove now that $(\omega^n)_{n\geq1}$ is uniformly equicontinuous for the weak topology of $L^p(\R^2)$. First, by definition of weak solution and by \eqref{eq:cm_boundomega} and \eqref{eq:cm_boundv}, if $\phi\in C^\infty_c(\R^2)$, then
		\begin{equation}\label{eq:cm_lipschitz}
			\begin{multlined}[.9\linewidth]
				\Big|\int_{\R^2}\omega^n(t,x)\phi(x)\,dx - \int_{\R^2}\omega^n(s,x)\phi(x)\,dx\Big|\leq\\
				\leq \|\omega^n\|_{L^\infty(L^1)}\|v^n\|_{L^\infty(L^p)}\|\nabla\phi\|_{L^\infty}|t-s|
				\lesssim C_0^2\|\nabla\phi\|_{L^\infty} |t-s|
			\end{multlined}
		\end{equation}
		for every $s,t\geq0$. Since compactly supported smooth functions are dense in $L^{p'}(\R^2)$, where $p'$ is the H\"older conjugate exponent of $p$, we have uniform equicontinuity in $L^p$ with the weak topology.
		%
		% BEGIN the following argument is standard, let us decide if omitting it or not
		%
		Indeed, if $\phi\in L^{p'}(\R^2)$, $\varepsilon>0$, $\phi_\varepsilon\in C^\infty_c(\R^2)$ is such that $\|\phi-\phi_\varepsilon\|_{L^{p'}}\leq\varepsilon/(4C_0)$ and $\delta=\varepsilon/(2C_0^2\|\nabla\phi_\varepsilon\|_{L^\infty})$, then for $s,t\geq0$ with $|t-s|\leq\delta$,
		\[
		\begin{aligned}
			\Big|\int_{\R^2}\omega^n(t)\phi\,dx - \int_{\R^2}\omega^n(s)\phi\,dx\Big|
			&\leq \Big|\int_{\R^2}\omega^n(t)\phi_\varepsilon\,dx - \int_{\R^2}\omega^n(s)\phi_\varepsilon\,dx\Big|\\
			&\quad  + \Big|\int_{\R^2}(\omega^n(t)-\omega^n(s))(\phi-\phi_\varepsilon)\,dx\Big|\\
			&\leq C_0^2\|\nabla\phi_\varepsilon\|_{L^\infty}|t-s|
			+ 2C_0\|\phi-\phi_\varepsilon\|_{L^{p'}}\\
			&\leq\varepsilon.
		\end{aligned}
		\]
		%
		% END standard argument
		%
		Thus, $(\omega_n)_{n\geq1}$ is uniformly equicontinuous in $L^p(\R^2)$ for the weak convergence, and uniformly bounded in $L^p(\R^2)$. Since bounded sets in $L^p(\R^2)$ are sequentially compact for the weak topology, by the Ascoli-Arzelà theorem and a diagonal argument, there is $\omega\in C_\textup{w}([0,\infty);L^p(\R^2))$ such that for every $T>0$ and $\phi\in L^{p'}(\R^2)$, along a subsequence (again denoted $\omega^n$)
		\[
		\sup_{t\in[0,T]}\Big|\int_{\R^2}\omega^n(t)\phi\,dx - \int_{\R^2}\omega(t)\phi\,dx\Big|
		\longrightarrow 0,
		\]
		as $n\to\infty$. In particular, $\omega\in L^\infty(0,\infty);L^p(\R^2))$.
		%
		% BEGIN another standard argument
		%
		Let us prove $\omega\in L^\infty(0,\infty;L^1(\R^2))$. Let $R>0$ and $\eta_R=\uno_{B_R(0)}$. Since $\omega^n(t)\to\omega(t)$ weakly in $L^p(\R^2)$, we have $\omega^n(t)\eta_R\to\omega(t)\eta_R$ weakly in $L^1(\R^2)$, thus
		\[
		\int_{\R^2}\omega(t)\eta_R\,dx
		\leq\liminf_n\int_{\R^2}|\omega^n(t)\eta_R|\,dx
		\leq C_0,
		\]
		and the $L^1$-bound follows by monotone convergence as $R\uparrow\infty$.
		%
		% END another standard argument
		%
		We turn to convergence of velocities. We have, for $\phi\in C^\infty_c(\R^2)$ and $t>0$,
		\begin{equation}\label{eq:cm_bs}
			\int_{\R^2}v^n(t)\phi\,dx
			= - \int_{\R^2}\omega^n K * \phi\,dx
			\longrightarrow - \int_{\R^2}\omega K * \phi \,dx
			= \int \phi K * \omega\,dx.
		\end{equation}
		Set $v = K * \omega$. By \eqref{eq:cm_boundv} and Morrey's inequality, $(v^n(t))_{n\geq1}$ is uniformly bounded and uniformly equi-continuous for all $t>0$. By Ascoli-Arzelà's theorem, for all $t$ there is a subsequence (dependent on $t$, so far) converging uniformly on compact subsets of $\R^2$. By \eqref{eq:cm_bs}, the limit is $v(t)$, thus the whole sequence $v^n(t)\to v(t)$ converges uniformly on compact subsets of $\R^2$.
		
		We can finally prove that $\omega$ is a weak solution. Let $\phi\in C^\infty_c(\R^2)$, then for every $t$,
		\[
		\int_{\R^2}\omega^n(t,x)\phi(x)\,dx
		\longrightarrow \int_{\R^2}\omega(t,x)\phi(x)\,dx.
		\]
		Moreover, since $\omega^n\to\omega$ in $C_\textup{w}([0,\infty);L^p(\R^2))$ and $v^n(s)\to v(s)$ uniformly on compact subsets of $\R^2$, we have for all $s>0$
		\[
		\int_{\R^2}\omega^n(s,x)v^n(s,x)\cdot\nabla\phi(x)\,dx
		\longrightarrow\int_{\R^2}\omega(s,x)v(s,x)\cdot\nabla\phi(x)\,dx.
		\]
		By \eqref{eq:cm_boundomega}, \eqref{eq:cm_boundv} and the dominated convergence theorem, the time integral converges as well.
		\smallskip
		
		We turn to the case $p=2$ and point out how to adapt the argument from $p>2$. Bound \eqref{eq:cm_boundomega} still holds (with $p=2$), while \eqref{eq:cm_boundv} is replaced by
		\begin{equation}\label{eq:cm_boundv2}
			\sup_n\sup_{t\in[0,\infty)}\bigl(\|v^n(t)\|_{L^r} + \|\nabla v^n(t)\|_{L^2}\bigr)
			\lesssim C_0,
		\end{equation}
		for every $r\in(2,\infty)$, and \eqref{eq:cm_lipschitz} is replaced by
		\begin{equation}\label{eq:cm_lipschitz2}
			\begin{multlined}[.9\linewidth]
				\Big|\int_{\R^2}\omega^n(t,x)\phi(x)\,dx - \int_{\R^2}\omega^n(s,x)\phi(x)\,dx\Big|\leq\\
				\leq \|\omega^n\|_{L^\infty(L^q)}\|v^n\|_{L^\infty(L^r)}\|\nabla\phi\|_{L^\infty}|t-s|
				\lesssim C_0^2\|\nabla\phi\|_{L^\infty} |t-s|,
			\end{multlined}
		\end{equation}
		with $\frac1q+\frac1r=1$, $q<2$ and $r>2$. We thus get uniform equicontinuity in $L^2$ with the weak topology, and $(\omega^n)_{n\geq1}$ converges uniformly on compact intervals to some $\omega\in C_\textup{w}([0,\infty);L^2(\R^2))$. With similar arguments as in the case $p>2$ we deduce $\omega\in L^\infty(0,\infty);L^1(\R^2))$.
		
		For the convergence of velocities we first notice that \eqref{eq:cm_bs} holds and set $v=K * \omega$. Indeed, $w^n(t) \longrightarrow \omega(t)$ weakly in $L^2(\R^2)$ and $(w^n(t))_{n\in \N}$ is bounded in each $L^q(\R^2)$, $q \in [1,2]$. Thus $w^n \longrightarrow \omega$ weakly in each $L^q(\R^2)$, $q \in (1,2]$. Since $K*\phi \in L^q(\R^2,\R^2)$ for all $q>2$, \eqref{eq:cm_bs} follows. This time we prove that $(v^n(t))_{n\geq1}$ converges strongly to $v(t)$ in $L^q_\loc(\R^2,\R^2)$ for all $t>0$ and every $q\in(2,\infty)$. Let $q>2$ and $R>0$, and set $B=B_R(0)$, then, by \eqref{eq:cm_boundv2}, $(v^n(t))_{n\geq1}$ is compact in $L^q(B,\R^2)$ for all $t$. Therefore there is a subsequence (again, in principle, depending on $t$) converging to $v$, due to \eqref{eq:cm_bs}. We can conclude that $v^n(t)$ converges strongly in $L^q(B,\R^2)$ to $v(t)$ for all $t>0$. This is sufficient to pass to the limit in order to prove that $\omega$ is a weak solution, as in the case $p>2$.
	\end{proof}
	
	\paragraph{Acknowledgment.}
	The first author is funded by the Deutsche Forschungsgemeinschaft (DFG, German Research Foundation) – Project-ID 317210226 – SFB 1283.
	
	The second author acknowledges the partial support of the project PNRR - M4C2 - Investimento 1.3, Partenariato Esteso PE00000013 - \emph{FAIR - Future Artificial Intelligence Research} - Spoke 1 \emph{Human-centered AI}, funded by the European Commission under the NextGeneration EU programme, of the project \emph{Noise in fluid dynamics and related models} funded by the MUR Progetti di Ricerca di Rilevante Interesse Nazionale (PRIN) Bando 2022 - grant 20222YRYSP, of the project \emph{APRISE - Analysis and Probability in Science} funded by the the University of Pisa, grant PRA\_2022\_85, and of the MUR Excellence Department Project awarded to the Department of Mathematics, University of Pisa, CUP I57G22000700001.
	
	\bibliographystyle{plain}
	\bibliography{rr24euler}
\end{document}